\documentclass[aos,preprint]{imsart}

\RequirePackage[OT1]{fontenc}
\RequirePackage{amsthm,amsmath}
\RequirePackage[numbers]{natbib}
\RequirePackage[colorlinks,citecolor=blue,urlcolor=blue]{hyperref}


\startlocaldefs
\numberwithin{equation}{section}
\theoremstyle{plain}
\newtheorem{thm}{Theorem}[section]
\newtheorem{rem}{Remark}
\newtheorem*{lem*}{Lemma}
 \usepackage{amssymb}
\usepackage[table]{xcolor}
\usepackage{mathrsfs}
\usepackage{stmaryrd}
\usepackage{wasysym}
\usepackage{esint}
\endlocaldefs

\begin{document}

\begin{frontmatter}

\title{A Goodness-of-Fit test for  Elliptical Distributions
  with Diagnostic Capabilities }
\runtitle{GoF test for Multivariate Elliptical Distributions}
\thankstext{T1}{The authors would like to thank Bernard Boulerice for his contribution to a preliminary version of this paper.}

\begin{aug}
\author{\fnms{Gilles R.} \snm{Ducharme}\thanksref{m1}\ead[label=e1]{gilles.ducharme@umontpellier.fr}},
\and
\author{\fnms{Pierre} \snm{Lafaye de Micheaux}\thanksref{m2}\ead[label=e2]{lafaye@unsw.edu.au}}

\runauthor{G. Ducharme et al.}

\affiliation{Universit\'e de Montpellier\thanksmark{m1} and UNSW Sydney\thanksmark{m2}}

\address{IMAG, Univ. Montpellier, CNRS, Montpellier, France\\
School of Mathematics and Statistics, UNSW Sydney, NSW 2052 Australia\\
\printead{e1}\\
\phantom{E-mail:\ }\printead*{e2}}

\end{aug}

\begin{abstract}
This paper develops a smooth test of goodness-of-fit for
elliptical distributions. The test is adaptively omnibus,  invariant to affine-linear transformations 
and has a convenient expression that can be broken into components. These components
have diagnostic capabilities and can be used to identify specific
departures. This helps in correcting the null model when the test rejects. As an example,
the results are applied to the multivariate normal distribution for which the R package ECGofTestDx is available.
 It is shown that the proposed
test strategy encompasses and generalizes a number of existing approaches. Some other cases are studied, such as the bivariate  
Laplace, logistic and Pearson type II distribution. 
A simulation experiment shows the usefulness of the diagnostic tools.
\end{abstract}

\begin{keyword}[class=MSC]
\kwd[Primary ]{62F03}
\kwd{62H05}
\kwd[; secondary ]{62E10}
\end{keyword}

\begin{keyword}
  \kwd{Diagnostic Information}
  \kwd{Elliptical Distribution}
  \kwd{Goodness-of-fit test}
  \kwd{Multivariate Laplace distribution}
  \kwd{Multivariate logistic distribution}
  \kwd{Multivariate normal distribution}
  \kwd{Multivariate Pearson type II distribution}
   \kwd{Smooth tests}
\end{keyword}

\end{frontmatter}

\section{Introduction}\label{sec:Introduction}

Elliptically contoured (EC), or elliptical for short, distributions
have become important tools in the analysis of multivariate data.
They retain from the multivariate normal (MVN) distribution the feature
of elliptical symmetry about a location $\boldsymbol{\mu}$. They
extend the MVN to allow modelling data with short or large tails.
Tools of multivariate analysis such as regression, correlation, PCA, discrimination,  are easily
ported to them. Many inferential procedures
from MVN theory remain valid, after slight modifications. They pave the way toward more flexible models, such
as skew-elliptic distributions, or more specialized, such as elliptical
copulas, elliptical complex and elliptical matrix distributions.
Consequently, they are used in many applications, such has portfolio
theory, risk management, radioimmunoassay,
astronomy, physics, signal processing, etc. (see \cite{Chmielewski1981} 
for a bibliography). 

In this paper we focus on $m$-dimensional  EC distributions
with density of the form 

\begin{equation}
f(\boldsymbol{x};\eta)=c_{m}\,\sqrt{\det(\mathbf{V}^{-1})}\,\phi_{m}((\boldsymbol{x}-\boldsymbol{\mu})^{T}\mathbf{V}^{-1}(\boldsymbol{x}-\boldsymbol{\mu})),\label{eq:1.1elliptical density}
\end{equation}
where  $c_{m}=\Gamma(m/2)/\left(\pi^{m/2}\intop_{0}^{\text{\ensuremath{\infty}}}y^{m/2-1}\phi_{m}(y)\,dy\right)$
is a normalizing constant, the density generator $\phi_{m}(\cdot)$ is defined over $\mathbb{R}_{\geq0}$ and the parameter $\eta=\left(\boldsymbol{\mu},\mathrm{\mathbf{V}}^{-1}\right)$
$\in$ $\Xi$ = $\mathbb{R}^{m}\times\mathbb{M}_{+}^{m}$, the space
of $m\times m$ positive definite matrices, is unknown. To give a
few examples taken from \cite[Chap.~13]{Bilodeau1999}
, the
MVN has $\phi_{m}(y)=\exp\{-y/2\}$; the multiStudent with $\nu$
degrees of freedom has $\phi_{m}(y)=(1-y/\nu)^{-(m+\nu)/2}$; the
multivariate $\alpha$-th power exponential distribution has $\phi_{m}(y)=\textrm{exp}\left\{ -y^{\alpha}\right\} $\textcolor{blue}{.}
The multivariate slash-elliptical distribution with shape parameter
$\alpha$ \cite{Wang2006} has $\phi_{m}(y)=y^{-(m+\alpha)/2}\intop_{0}^{y/2}t^{(m+\alpha)/2}e^{-t}dt$.
A more elaborate version, the multivariate extended slash-elliptical
distribution has been introduced recently \cite{Rojas2014} to
fit heavy-tailed data. Additions to the catalogue of EC distributions
are periodically made in response to challenging new data sets. 

To exploit these statistical models in applications, a user needs
tools to help in selecting a distribution from this growing catalogue.
One such tool is a Goodness-of-Fit (GoF) test of the null hypothesis
that some data arise from a postulated EC density of the form (\ref{eq:1.1elliptical density}).
The goal of the present work is to develop such a GoF test having
attractive features.

An important feature of a GoF test is its power in detecting departures
from the null density. GoF tests can be
roughly divided into omnibus and directional. An omnibus test will
have power converging to one (i.e. be consistent) for any departure
from the null. A directional GoF test will be consistent for only some departures. At first glance, the omnibus property appears essential.
But equal power cannot be allocated to each departure and, for finite
samples, only some will have significant power (see \cite[Chap. 5]{Shorack1986}). Thus the interest for compromising
approaches offering some control over the power allocation, such as
the smooth test paradigm introduced by \cite{Neyman1937}. The smooth test
is directional along a set of $K$ departures, but the value of $K$
can be selected.  This creates a slider between directional and
omnibus GoF tests that yields an adaptive
form of the omnibus property which has been found, in many simulations,
to be very effective. This has brought \cite[p.~9]{Rayner1990} to recommend : ``\emph{Don't use those other methods\textendash use
a smooth test}!''. In its original form,
its main weakness is the lack of criteria to position $K$ on the
slider. To correct this, \cite{Ledwina1994} has introduced a version that selects $K$ in a data-driven fashion. The resulting
data-driven smooth test constitutes, power-wise, a significant improvement. This has prompted \cite{Kallenberg1997} to sharpen \cite{Rayner1990}'s recommendation into : ``\emph{use
a data-driven smooth test}!''. 

Another desirable feature of a GoF test is the ability to extract,
when the test rejects, some information regarding the aspects of the
null density contradicted by the data. As stated in \cite[p.~800]{Gelman1996} regarding the process of model checking: ``\emph{the
purpose of the checking is not simply to ``accept'' or ``reject'',
but rather to discover what aspects of the data are not being fit
well by the model}''. A GoF test that can provide such information
is said to have diagnostic (Dx) capabilities.
The smooth test has Dx capabilities, see \cite{Ducharme2016}.

Encouraged by these  features, much work
(\cite{Javitz1975,Thomas1979,Koziol1987,Kallenberg1997,Rayner2009,Thas2010} among others)
has been done to extend the smooth test paradigm. However, for multivariate
densities, few extensions have appeared. One hurdle comes from the fact that EC
distributions are closed under the group of affine-linear transformations
of the data. The invariance principle states that if
a  statistical problem is left invariant after a transformation, then its solution
should also be invariant under this transformation, otherwise
interpretability of the inference can be compromised. \cite[p.~469]{Henze2002} nicely summarizes the importance of this principle
: ``...\emph{ any proposal to use a non-invariant test.}.. \emph{must
come together with a special justification}''.  For the
case of the bivariate normal, \cite{Bogdan1999} has developed a data-driven
smooth test that cleverly combines the smooth test statistics for
univariate normality. But it is not invariant to rotations, so that an unscrupulous user could simply rotate
the data to reach a pre-specified conclusion.
Also her approach breaks down for general EC distributions because
a multivariate EC density does not always have the same density generator
as its marginals (the so-called inconsistency property, see \cite[p.~209]{Bilodeau1999}
. \cite{Boulerice1997} develops a smooth GoF test invariant to rotations,
but their method is confined to data lying on an hypersphere.

Because of this scarcity, users facing the problem of assessing the
fit of a multivariate EC density may feel abandoned by statistical methodology. When the null density
is the MVN, numerous GoF tests exist (see \cite{Koziol1986,Romeu1993,Henze2002,Ozlem2016} and references therein).
However, GoF tests tailored to other EC distributions are almost non-existent 
(with the noticeable exceptions of \cite{Fragiadakis2011} and \cite{Ebner2012}) and a user must resort to a general-purpose GoF test, such as the multivariate version of the
Cramer von Mises approach (see \cite{Mcassey2013} for a short review), or ad hoc methods.

In this paper, we develop a smooth GoF test for EC densities of the
form (\ref{eq:1.1elliptical density}) with unknown parameter $\eta$,
that is  adaptively omnibus, has
Dx capabilities and is affine-linear invariant. As a bonus, the asymptotic
reference distribution is a standard $\chi^{2}$ and its power function
can be approximated by a sum of independent non-central $\chi^{2}$.
The approach is based on the ideas in \cite{Boulerice1997}
but adapted to the case of affine-linear transformations. Section
\ref{sec:Sec 2} adapts the smooth test paradigm to the case of a
general EC density and sets the stage for the several levels
of invariance required in our approach: problem invariance, test statistic invariance and Dx invariance. 
Section \ref{sec:Sec 3 the smooth test} makes explicit
the orthonormal basis on which our smooth test is based and derives
the associated test strategy by exploiting a variant of Rao's score test. Section
\ref{sec: Section 4} shows that the smooth
test statistic possesses a convenient explicit expression from which emerges a decomposition
into a sum of three invariant
and asymptotically independent $\chi^{2}$ distributed components,
which we refer to as the $\mathcal{Q}=\mathcal{UIR}$ decomposition.
It is explained that each component can provide
 interpretable Dx information about the aspects of the null
EC density contradicted by the data. It is also explained how
the omnibus/directional slider must be set up to preserve invariance
in all components of this decomposition. Section \ref{sec:Section 5 applications}
computes the $\mathcal{Q}=\mathcal{UIR}$ decomposition for the multivariate
normal (MVN) distribution and discusses how our smooth test extends
and relates to a number of proposals for this problem.  The R package ECGofTestDx can perform the necessary calculations. The case of
the bivariate Laplace is  also considered. Section \ref{sec:sec 6 Usefulness of Dx} reports
on an experiment that was conducted to see how the elements of the $\mathcal{Q}=\mathcal{UIR}$ decomposition
behave under a number of alternative densities somewhat representative
of what can be encountered in actual situations. It is seen that
the decomposition can indeed help in providing illuminating Dx information. Some
points requiring further research are collected in Section \ref{sec:Conclusions Sec 7}.
Proofs are confined to  Appendix \ref{sec: 8 Proofs-and-auxiliary}
and Appendix  \ref{sec:Appeb-B} gives the details for the computation of
the smooth test for the MVN when $m=2,3$ as well as MATHEMATICA commands
to extend in higher dimensions. Appendix  \ref{sec:Appeb-C} gives the smooth test for two other EC distribution, the  bivariate logistic and Pearson type II distributions.

\section{The Smooth Test Paradigm for Elliptical Distributions}\label{sec:Sec 2}

Let $\boldsymbol{X_{1}},\ldots,\boldsymbol{X}_{n}$ be independent
and identically distributed \emph{m}-dimensional observations
with density $\psi(\cdot)$. Consider the
problem of testing : 
\begin{equation}
H_{0}:\psi(\cdot)\in\mathcal{F}_{0}=\{f(\cdot;\eta),\eta\in\Xi\}\qquad vs\qquad H_{1}:\psi(\cdot)\notin\mathcal{F}_{0},\label{eq:2.1}
\end{equation}
where $f(\cdot;\eta)$ has the form (\ref{eq:1.1elliptical density})
with a given  $\phi_{m}(\cdot)$.
Here we consider the usual case where $\eta$ = $(\boldsymbol{\mu},\mathbf{V}^{-1})$ is unknown. We set the following :\medskip{}

\noindent \begin{flushleft}
\emph{Assumption} A : The support
of $f(\cdot;\eta)$ does not depend on $\eta$ and the mapping from
$\Xi$ to $\mathcal{F}_{0}$ is one-to-one. Moreover $f(\cdot;\eta)$
has a moment generating function so that all moments of $\boldsymbol{X}_{1}$
exist. Also $\mathbb{P}[\boldsymbol{X}_{1}=\boldsymbol{0}]=0.$
\par\end{flushleft}

\medskip{}

\noindent \begin{flushleft}
\emph{Assumption\/} B : $\hat{\eta}$
is an affine-equivariant estimator (in the sense of Definition 13.1
in \cite{Bilodeau1999}
) of $\eta$ such that 
$\sqrt{n}(\hat{\eta}-\eta)=$ $\frac{1}{\sqrt{n}}\sum_{i=1}^{n}\boldsymbol{\ell}(\boldsymbol{X}_{i},\eta)+o_{p}(1)$,
where $\mathbb{E}_{0}(\boldsymbol{\ell}(\boldsymbol{X}_{1},\eta))=\mathbf{0}$
and the covariance matrix $\mathbb{V}_{0}(\boldsymbol{\ell}(\boldsymbol{X}_{1},\eta))$
is finite. 
\par\end{flushleft}

\medskip{}

Now  confine $\psi(\cdot)$ to the embedding family of densities 

\begin{align} 
\mathcal{G}= & \left\{ g(\cdot;\eta,h(\cdot))\left|\eta\in\Xi,h(\cdot)\in \mathbb{H} \textrm{ and }g(\cdot;\eta,\boldsymbol{0}(\cdot))=f(\cdot;\eta)\right.\right\} \label{eq:Definition G star}
\end{align}
where $\boldsymbol{0}(\cdot)$ is the zero function assumed to belong to a suitable space of functions $\mathbb{H}$. This allows to restate problem (\ref{eq:2.1}) as that of testing
\begin{equation}
H_{0}:h(\cdot)=\boldsymbol{0}(\cdot)\qquad vs\qquad H_{1}:h(\cdot)\neq\boldsymbol{0}(\cdot).\label{eq:2.2}
\end{equation}

Here  we adopt from \cite{Ducharme2001} embedding functions of the form 

\begin{equation}
g(\cdot;\eta,h(\cdot))=f(\cdot;\eta)\times\frac{(1+h(\cdot)-\mathbb{E}_{0}(h(\boldsymbol{X})))^{2}}{1+\Vert h(\cdot)-\mathbb{E}_{0}(h(\boldsymbol{X}))\Vert_{(f,\eta)}^{2}},\label{eq:2.3}
\end{equation}
where the subscript ``0''  refers to a statistical operator evaluated under $H_{0}$ and 
$\Vert h(\cdot)\Vert_{(f,\eta)}^{2}=\int h^{2}(\boldsymbol{x})f(\boldsymbol{x};\eta)d\boldsymbol{x} $. These are related to Hellinger's metric. 

The confinement to $\mathcal{G}$, and thus the choice of $\mathbb{H}$, affects the properties of our test. 
Regarding invariance, let $AL(m)$ be the group of affine-linear transformations on $\mathbb{R}^{m}$
with generic element $\gamma=$ $\gamma(\boldsymbol{x})=\mathbf{A(}\boldsymbol{x}+\boldsymbol{b})$,
represented by the pair $(\mathbf{A},\boldsymbol{b})$ where\emph{
$\mathbf{A}$} is a $m\times m$ non-singular real matrix and $\boldsymbol{b}\in$
$\mathbb{R}^{m}$. EC distributions are closed under these
transformations: the density of $\gamma(\boldsymbol{X})$ has the
form (\ref{eq:1.1elliptical density}) with its parameters transformed
from $\eta=(\boldsymbol{\mu},\mathbf{V}^{-1})$ to $\gamma^{*}(\eta)=$
$\mathrm{(\mathbf{A}}(\boldsymbol{\mu}+\boldsymbol{b}),(\mathbf{A}^{-1})^{T}\mathbf{V}^{-1}\mathbf{A}^{-1})$
\cite[p.~207]{Bilodeau1999}. 
Our GoF test, and its Dx information, must  be
invariant to such transformations. To this end, let $L^{2}(f,\eta)=\{h(\cdot)\mid\Vert h(\cdot)\Vert_{(f,\eta)}^{2}<\infty\}$ and
 set $\mathbb{H} = \{h\in L^{2}(f,\eta)\,|\,\Vert h\circ\gamma(\cdot)\Vert_{(f,\eta)}<\infty,\forall\gamma\in AL(m)\}$.
With this choice, embedding family  (\ref{eq:Definition G star})  does not depend on the value of $\eta$ and is obviously closed under 
affine-linear transformations.This will be important to ensure invariance.
Note that $\mathbb{H}$ is a slight restriction of the Banach space  $L^{2}(f,\eta)$, so that almost
any reasonable $\psi(\cdot)$ can be written as (\ref{eq:2.3}). This is important for the omnibus property.

We now recall the following terminology. A subspace \emph{$\mathcal{H}$}
of $\mathbb{H}$ is said $AL(m)$-\emph{invariant} if $h(\cdot)$
$\in$ \emph{$\mathcal{H}$} implies $h\circ\gamma(\cdot)$ $\in$
$\mathcal{H}$ for all $\gamma$ $\in$ $AL(m)$. An $AL(m)$-invariant
subspace is further said to be \emph{irreducible} if it contains no
nontrivial $AL(m)$-invariant subspaces. For any $\eta\in\Xi$, let
$\left\langle \cdot,\cdot\right\rangle _{(f,\eta)}$ be the scalar
product associated with $\Vert\cdot\Vert_{(f,\eta)}$. $\mathbb{H}$
equipped with this scalar product is a Hilbert space denoted $\mathbb{H}_{(f,\eta)}$.
With these definitions at hand, a variant of the argument in \cite{Boulerice1997} shows that $\mathbb{H}_{(f,\eta)}$ can be decomposed
into a sequence of disjoint irreducible $AL(m)$-invariant subspaces
$\{\Pi_{k},k\geq0\}$, each of a finite dimension $d_{m}(k)$ that
will be made explicit in Theorem \ref{thm:3.1} below. Hence,
\begin{equation}
\mathbb{H}_{(f,\eta)}=\bigoplus_{k=0}^{\infty}\Pi_{k},\label{eq:2.4}
\end{equation}
where $\oplus$ is the direct sum operator with respect to $\left\langle \cdot,\cdot\right\rangle _{(f,\eta)}$
and $\Pi_{0}$ is the set of constant functions. For each $k\geq0$,
let $\{\pi_{k,j,\ell}(\cdot;\eta),(j,\ell)\in B_{k}\}$ be a complete orthonormal basis
(CONB) for $\Pi_{k}$. Here $B_{k}$ is a set of $d_{m}(k)$ pairs of integers that will be
explicited, along with a choice $\pi_{k,j,\ell}(\cdot;\eta)$
in Theorem \ref{thm:3.1}. Thus Span$\{\pi_{k,j,\ell}(\cdot;\eta),(j,\ell)\in B_{k}\}$
= $\Pi_{k}$  and for any $(j,\ell)$, $(j^{\prime},\ell^{\prime})$
$\in B_{k}$, $\left\langle \pi_{k,j,\ell}(\cdot;\eta),\pi_{k,j^{\prime},\ell^{\prime}}(\cdot;\eta)\right\rangle {}_{(f,\eta)}$
= $\delta_{jj^{\prime}}\delta_{\ell\ell^{\prime}}$ where $\delta$
is Kronecker's delta.
It follows from (\ref{eq:2.4}) that the whole
set of $\pi_{k,j,\ell}(\cdot;\eta)$ forms a CONB for $\mathbb{H}_{(f,\eta)}$
and that any $h(\cdot)$ $\in$ $\mathbb{H}_{(f,\eta)}$ can be written as
\begin{equation}
h(\cdot)=\sum_{k=0}^{\infty}\sum_{(j,\ell)\in B_{k}}\theta_{k,j,\ell}(\eta)\pi_{k,j,\ell}(\cdot;\eta),\label{eq:fourier expansion}
\end{equation}
where $\theta_{k,j,\ell}(\eta)$ = $\left\langle h(\cdot),\pi_{k,j,\ell}(\cdot;\eta)\right\rangle {}_{(f,\eta)}$. 
With (\ref{eq:fourier expansion}) the problem of testing (\ref{eq:2.2})
reduces to that of testing the nullity of all (for an omnibus test
within $\mathcal{G}$ ) or part (for a directional version) of
the $\theta_{k,j,\ell}(\eta)$. Focusing on the latter, fix $K$ and consider the subset of $\mathcal{G}$ that contains the  functions of the form
\begin{equation}
g_{K}(\cdot;\eta,\boldsymbol{\theta}(\eta))=f(\cdot;\eta)\times\frac{\left(1+\sum_{k=1}^{K}\sum_{(j,\ell)\in B_{k}}\theta_{k,j,\ell}(\eta)\pi_{k,j,\ell}(\cdot;\eta)\right)^{2}}{1+\sum_{k=1}^{K}\sum_{(j,\ell)\in B_{k}}\theta_{k,j,\ell}^{2}(\eta)}.\label{eq:2.5}
\end{equation}
Further  confining $\psi(\cdot)$ to this subset reduces the problem of testing (\ref{eq:2.1}) to that
of testing 
\begin{equation}
H_{0}:\boldsymbol{\theta}(\eta)=\boldsymbol{0}\qquad vs\qquad H_{1}:\boldsymbol{\theta}(\eta)\neq\boldsymbol{0},\label{eq: the final null hypothesis}
\end{equation}
where $\boldsymbol{\theta}(\eta)=\{\theta_{k,j,\ell}(\eta),1\leq k\leq K,(j,\ell)\in B_{k}\}$.
Under (\ref{eq:2.5}), it is easy to see that if $\boldsymbol{\theta}(\eta)$
is small, the dominant term in the maximum likelihood estimator (mle)
of $\theta_{k,j,l}($$\eta)$ is $n^{-1}\sum_{i=1}^{n}\pi_{k,j,\ell}(\boldsymbol{X}_{i};\eta)$.
Hence, it makes statistical sense (for another justification, see
\cite{Klar2000}) to base a test of (\ref{eq: the final null hypothesis})
on
\begin{equation}
\bar{\boldsymbol{\pi}}(\hat{\eta})=\left(n^{-1}\sum_{i=1}^{n}\pi_{k,j,\ell}(\boldsymbol{X}_{i};\hat{\eta}),1\leq k\leq K,(j,\ell)\in B_{k}\right)^{T}.\label{eq:2.6}
\end{equation}

If the $\pi_{k,j,\ell}(\cdot;\cdot)$ are continuously differentiable
in $\eta$, then Theorem 2.1 a) of \cite{Klar2000} ensures the asymptotic
normality (with asymptotic expectation $\mathbf{0}$) of $\sqrt{n}\bar{\boldsymbol{\pi}}(\hat{\eta})$
under $H_{0}$. The asymptotic covariance matrix depends on the choice
of $\hat{\eta}$. In particular, if $f(\cdot;\eta)$ satisfies the
conditions in \cite[p.~121]{Ferguson1996}, then $\hat{\eta}$ can be taken
as the mle of $\eta$, which satisfies \emph{Assumption} B with $\mathbb{V}_{0}(\mathbf{\ell}(\boldsymbol{X}_{i},\eta))=\boldsymbol{\mathcal{J}}_{\eta}^{-1}$,
where $\boldsymbol{\mathcal{J}}_{\eta}$ is Fisher's information for
$\eta$ under $H_{0}$. If $\mathbf{J}_{\eta}$ denotes the matrix
with elements $\mathbb{C\mathrm{ov}}_{0}\left(\pi_{k,j,\ell}(\boldsymbol{X}_{i};\eta),\partial\log f(\boldsymbol{X}_{i};\eta)/\partial\eta_{a}\right)$
where $\eta_{a}$ is a $a$-th component of $\eta$, then from Theorem
2.1 c) of \cite{Klar2000}, this covariance matrix is $\mathbf{I}_{\nu_{\mathcal{Q}}}-\mathbf{J}_{\eta}\boldsymbol{\mathcal{J}}_{\eta}^{-1}\mathbf{J}_{\eta}^{T}$
where $\mathbf{I}_{v}$ is the identity matrix of order $\nu$  and
$\nu_{\mathcal{Q}}$ = $\sum_{k=1}^{K}d_{m}(k)$. Assume this matrix invertible. Then test statistic 
\begin{equation}
\mathcal{Q}_{K}=\mathcal{Q}_{K}(\boldsymbol{X},\hat{\eta})=n\bar{\boldsymbol{\pi}}^{T}(\hat{\eta})(\mathbf{I}_{v_{\mathcal{Q}}}-\mathbf{J}_{\hat{\eta}}\boldsymbol{\mathcal{J}}_{\hat{\eta}}^{-1}\mathbf{J}_{\hat{\eta}}^{T})^{-1}\bar{\boldsymbol{\pi}}(\hat{\eta})\label{eq:2.7}
\end{equation}
is under $H_{0}$ asymptotically $\chi_{\nu_{\mathcal{Q}}}^{2}$.
We refer to $\mathcal{Q}_{K}$ as a \emph{global} test statistic for
$H_{0}$. As a by-product of the CONB introduced in the next section,
an explicit expression for this test statistic is available (see (\ref{eq:alternate expression for the test statistic})
and (\ref{eq: U I R decomp})).

For this to make statistical sense requires $AL(m)$-invariance
of version (\ref{eq: the final null hypothesis}) of problem
(\ref{eq:2.1}) and $AL(m)$-invariance of test statistic $\mathcal{Q}_{K}(\boldsymbol{X},\hat{\eta})$.
The latter will be tackled in Sections \ref{sec:Sec 3 the smooth test}
and \ref{sec: Section 4}. The former requires that, after transforming
into $\gamma(X)$, the new null $\boldsymbol{\theta}(\gamma^{*}(\eta))=\boldsymbol{0}$
holds if and only if $\boldsymbol{\theta}(\eta)=\boldsymbol{0}.$
Because of the properties of the $\Pi_{k}$ in (\ref{eq:2.4}), a
condition for this is that all $\pi_{k,j,\ell}(\cdot;\eta),(j,\ell)\in B_{k}$
must appear in (\ref{eq:2.5}) with no $\theta_{k,j,\ell}(\eta)$
structurally set to zero (i.e. no term $\pi_{k,j,\ell}(\cdot;\eta)$
systematically excluded). As will be seen
in Remark \ref{rem: remark choix de lamnda}, some components of $\bar{\boldsymbol{\pi}}(\hat{\eta})$
may systematically vanish, i.e. $\pi_{k,j,\ell}(\boldsymbol{X}_{i};\hat{\eta})\equiv0$.
Then all other terms in $\left\{ \pi_{k,j,\ell}(\cdot;\eta),(j,\ell)\in B_{k}\right\} $
must be dropped from (\ref{eq:2.6}) to preserve problem invariance.
As will be seen in Section \ref{sec: Section 4}, the same condition,
coupled with our choice for the $\pi_{k,j,\ell}(\cdot;\eta)$ also
ensures $AL(m)$-invariance of $\mathcal{Q}_{K}(\boldsymbol{X},\hat{\eta})$.

\begin{rem}
\label{rem:Remark 2} Proving the unicity of the mle $\hat{\eta}$ can be difficult. 
Some conditions are given in \cite{Kent1991}. The
verification of regularity conditions ensuring \emph{Assumption} B (e.g. \cite{Ferguson1996}) 
is also tedious. Some work has been done in 
\cite{Hassan2005} for particular cases of the $\alpha$-th
power exponential distribution. Other estimators could in principle
be used but the matrix in (\ref{eq:2.7})
becomes more complicated. Theorem 2.1.a) in \cite{Klar2000} gives its
general expression without exploiting the particular structure of
elliptical distributions. 
\end{rem}
\begin{rem}
\label{rem:Remark on power}In view of (\ref{eq:2.7}), the power
function for a fixed $\psi(\cdot)$ can, under mild assumptions
(see \cite{Inglot1994}), be approximated by 
\begin{alignat}{1}
\mathbb{P}_{\psi}[\mathcal{Q}_{K}>c] & =P\left[\sum_{k=1}^{K}\sum_{(j,\ell)\in B_{k}}\lambda_{k,j,\ell}\chi_{1}^{2}(n\nu_{k,j,\ell}^{2})>c\right]+O(n^{-1/2}),\label{eq:approximate power function}
\end{alignat}
where $\nu_{k,j,\ell}$ are functions of $\int\pi_{k,j,\ell}(\boldsymbol{x};\eta)\psi(\boldsymbol{x})d\boldsymbol{x}$
and $\lambda_{k,j,\ell}$ are the eigenvalues of a complicated matrix (see
\cite[Sec.~3.2]{Ducharme2016} for details in the case where
$\mathbf{J}_{\eta}=\mathbf{0}$, as in Section \ref{subsec:Sous section sur la MVN}).
When $\psi(\cdot) $ is of the form (\ref{eq:2.7}), 
$\nu_{k,j,\ell}\simeq O(\theta_{k,j,\ell}(\eta))$. If one component
of $\boldsymbol{\theta}(\eta)$ in (\ref{eq: the final null hypothesis})
differs from 0, the test will be consistent. This offers a 
handle on  balancing the directional/omnibus slider. On the one
hand, power is lost when, for  $k > K$, one $\theta_{k,j,\ell}(\eta)$ 
is large. On the other hand, because $\theta_{k,j,\ell}(\eta)\rightarrow 0$
with $k$, a large $K$ could add terms close to a $\chi_{1}^{2}(0)$
in (\ref{eq:approximate power function}), leading to
power dilution. To properly set a balance, some knowledge about
$\psi(\cdot)$ is needed; otherwise see Remark \ref{rem:remarque sur omnibusite}
for choosing $K$ in a data-driven fashion. But these remarks
pertain to the $\theta_{k,j,\ell}(\eta)$ : a compounding difficulty is that some $\lambda_{k,j,\ell}$
may also be small, with the corresponding $\lambda_{k,j,\ell}\chi_{1}^{2}(n\nu_{k,j,\ell}^{2})$
stochastically different from a $\chi_{1}^{2}(0)$, which again affects power, in some cases rendering the test biased for
small samples; see the $\alpha-$th power exponential
case in Section \ref{sec:sec 6 Usefulness of Dx}. 
\end{rem}
\begin{rem}
\label{rem:Remark 2-2} The above test can in principle be extended
to the case where the density generator $\phi_{m}(\cdot)$ in (\ref{eq:1.1elliptical density})
involves a shape parameter, such as with the $\alpha$-th power exponential distribution.
We do not pursue this further here as the necessary developments are
beyond the scope of the paper. 
\end{rem}
\begin{rem}
\label{rem:Remark 5 on Monte Carlo} Convergence toward the 
asymptotic $\chi^{2}$  distribution can be slow. A benefit of invariance
is that better approximations can be obtained
by Monte Carlo resampling from any convenient $f(\cdot;\eta)$ distribution
in $\mathcal{F}_{0}$.
\end{rem}

\section{The smooth test statistic}\label{sec:Sec 3 the smooth test}

\subsection{Construction of the basis}\label{subsec:Sec3.1}

Suppose that \emph{$\boldsymbol{X}$} has a density of the form (\ref{eq:1.1elliptical density}).
The representation in \cite{Cambanis1981} 
states that the random vector \emph{$\boldsymbol{Y}$} = $\mathbf{V}^{-1/2}(\boldsymbol{X}-\boldsymbol{\mu})$
has a spherical distribution (i.e. density (\ref{eq:1.1elliptical density})
with $\eta$ = $(\mathbf{0},\mathbf{I}_{m})$) with stochastic representation 
\begin{equation}
\boldsymbol{Y}=R\boldsymbol{U},\label{eq:3.1}
\end{equation}
where \emph{$R$} = $R(\boldsymbol{X})$ = $\Vert\mathbf{V}^{-1/2}(\boldsymbol{X}-\boldsymbol{\mu})\Vert$
is referred to as the radius, independent of \emph{$\boldsymbol{U}$}
= $\boldsymbol{U}(\boldsymbol{X})$ = $\boldsymbol{Y}/\left\Vert \boldsymbol{Y}\right\Vert \sim U(\Omega_{m})$,
the uniform distribution on the unit sphere $\Omega_{m}$. Here $\left\Vert \cdot\right\Vert $
is the Euclidean norm. The density of \emph{$\boldsymbol{Y}$} can
thus be parametrized in polar coordinates as
\begin{equation}
f_{R,\boldsymbol{U}}(r,\boldsymbol{u})=c_{m}\phi_{m}(r^{2})r^{m-1}\,dr\,d\omega_{m}(\boldsymbol{u}),\label{eq:3.2}
\end{equation}
where $d\omega_{m}(\boldsymbol{u})$ denotes the area element on $\Omega_{m}$
with $\omega_{m}(\Omega_{m})$ = $2\pi^{m/2}/\Gamma(m/2)$. This mapping
from \emph{$\boldsymbol{X}$} $\in$ $\mathbb{R}^{m}$ to $(R,\boldsymbol{U})$
$\in$ $\mathbb{R}^{+}\times\Omega_{m}$ depends on $\eta$ but $f_{R,\boldsymbol{U}}(r,\boldsymbol{u})$
is invariant to the choice of $\mathbf{V}^{-1/2}$. 

A CONB for $\mathbb{H}_{(f,\eta)}$ will be obtained by combining
elements of CONBs associated with the distributions of \emph{R} and
\emph{$\boldsymbol{U}$,} so that $\pi_{k,j,\ell}(\boldsymbol{X};\eta)$
can be written as $\pi_{k,j,\ell}(R(\boldsymbol{X}),\boldsymbol{U}(\boldsymbol{X}))$.

First consider \emph{$\boldsymbol{U}$}. Let $\mathcal{O}(m)$ be
the group of rotations on $\mathbb{R}^{m}$. From \cite[p.~17]{Helgason1984}, the space $E_{m}(k)$ of hyperspherical 
harmonics of degree \emph{$k$} in \emph{$m$}
dimensions is an irreducible $\mathcal{O}(m)$\textendash invariant
subspace of the space of homogeneous polynomials of degree \emph{k}
on $\Omega_{m}$ and, from \cite[Thm.~5.12, p.~81]{Axler2001}
, the space of square integrable functions on $\Omega_{m}$
can be decomposed as :

\[
L^{2}(\Omega_{m})=\bigoplus_{k=0}^{\infty}E_{m}(k).
\]
Again from \cite[Prop.~5.8, p.~78]{Axler2001}
, 
\begin{equation}
\dim(E_{m}(k))=e_{m}(k)=\begin{cases}
1 & \mathrm{if\,\,}k=0\\
m & \mathrm{if\,\,}k=1\\
C_{m-1}^{m+k-1}-C_{m-1}^{m+k-3} & \mathrm{if\,\,}k\geq2
\end{cases}.\label{eq:3.3}
\end{equation}

Let $\{\Psi_{k,\ell}(\boldsymbol{u}),\ell=1,\ldots,e_{m}(k)\}$
be a CONB for $E_{m}(k)$ with respect to the scalar product $\left\langle h_{1},h_{2}\right\rangle _{d\omega_{m}}=$
$(\omega_{m}(\Omega_{m}))^{-1}\intop_{\Omega_{m}}h_{1}(\boldsymbol{u})h_{2}(\boldsymbol{u})d\omega_{m}(\boldsymbol{u})$.
When $m=2$, writing $\boldsymbol{u}^{T}=(\mu_{1},\mu_{2})=(\textrm{cos}\,\theta,\textrm{sin}\,\theta)$ 
we have $\Psi_{k,1}(\mu_{1},\mu_{2})=\sqrt{2}\,\textrm{cos}(k\theta),$
$\Psi_{k,2}(\mu_{1},\mu_{2})=\sqrt{2}\,\textrm{sin}(k\theta)$. When
\emph{m} = 3,
$\Psi_{k,\ell}(\mu_{1,}\mu_{2},\mu_{3})$  $= P_{k-j}^{j}(\mu_{1})\Psi_{k,j^{\prime}}(\mu_{2},\mu_{3})$
where $P_{k-j}^{j}(\cdot)$ is the associated Legendre function of
the first kind of order $j$ and degree $k-j$, for $j=0,...,k$,
$j^{\prime}=1,\min\{j+1,2\}$. This is a particular case of a recurrence
formula, explained in \cite{Boulerice1997},
that generates the hyperspherical harmonics in dimension $m$ from those in
lower dimensions. Such calculations require symbolic manipulations. In particular, \cite[Appendix~B]{Axler2001} 
give details about the MATHEMATICA program \texttt{HFT10.m} that computes
 $\{\Psi_{k,\ell}(\boldsymbol{u}), \ell=1,\ldots,e_{m}(k)\}$
for any $m$. To facilitate the application of the methods of the
paper, the instructions  to generate $\Psi_{k,\ell}(\boldsymbol{u})$
with this program are detailed in our Appendix \ref{sec:Appeb-B}.

Next, we look at the radius\emph{ R}. For each $i\geq0$, consider
the set of functions $\{s_{j,i}(r),j\geq0\}$ where $s_{j,i}(\cdot)$
is a polynomial of degree \emph{j} in $r^{2}$ satisfying
\begin{equation}
\omega_{m}(\Omega_{m})c_{m}\intop_{0}^{\infty}s_{j,i}(r)s_{j^{\prime},i}(r)\phi_{m}(r^{2})r^{m-1+2i}dr=\delta_{j,j^{\prime}}.\label{eq:3.4}
\end{equation}
To compute these polynomials, we apply the method described in
Section 3 in \cite{Boulerice1997}, but using the scalar product
(\ref{eq:3.4}). For $i\geq0$, let $\mu_{j,i}=\mathbb{E}_{0}(R^{2(j+i)})$.
Write $\boldsymbol{\mu}_{j,i}$ = $(\mu_{j,i},\mu_{j+1,i},\ldots,\mu_{2j-1,i})^{T}$
and 

\[
\mathbf{M}_{j,i}=\left(\begin{array}{ccc}
\mu_{0,i} & \cdots & \mu_{j-1,i}\\
\vdots & \ddots & \vdots\\
\mu_{j-1,i} & \cdots & \mu_{2j-2,i}
\end{array}\right).
\]
Set these quantities to 0 when \emph{j} = 0. Using the argument
 leading to (4.4) of \cite{Boulerice1997}, we get
\begin{equation}
s_{j,i}(r)=\frac{r^{2j}-(1,r^{2},\ldots,r^{2j-2})\mathbf{M}_{j,i}^{-1}\boldsymbol{\mu}_{j,i}}{\sqrt{\mu_{2j,i}-\boldsymbol{\mu}_{j,i}^{T}\mathbf{M}_{j,i}^{-1}\boldsymbol{\mu}_{j,i}}}.\label{eq:3.5}
\end{equation}
The MATHEMATICA commands for these functions in the important
case of the MVN distribution are in Appendix  \ref{sec:Appeb-B}. The following
theorem, whose proof is given in Appendix \ref{subsec:Proof-of-Theorem 3.1},
explains how to construct a CONB (i.e. the $\pi_{k,j,\ell}(\cdot;\cdot)$
of the previous section) associated with an $f(\cdot;\eta)$ of the
form (\ref{eq:1.1elliptical density}).
\begin{thm}
\label{thm:3.1} Fix $\eta\in\Xi$. Let $\mathcal{P}_{k}$ be the
space of polynomials of degree k in $\boldsymbol{x}\in\mathbb{R}^{m}$.
Let $\Pi_{0}$ = $\mathcal{P}_{0}$ and for $k\geq1$, let $\Pi_{k}$
be the subspace of \textup{$\mathcal{P}_{k}$} such that $p(\cdot)$
$\in$ \textup{$\Pi_{k}$} if 
\begin{equation}
\intop_{\mathbb{R}^{m}}p(\boldsymbol{x})p_{k^{\prime}}(\gamma(\boldsymbol{x}))f(\boldsymbol{x};\eta)d\boldsymbol{x}=0\label{eq:3.6}
\end{equation}
for all $p_{k^{\prime}}(\cdot)$ $\in$ \textup{$\Pi_{k^{\prime}}$,
0 $\leq k^{\prime}<k$ }\textup{\emph{ and}}\textup{ $\gamma$ $\in$$AL(m).$
}\textup{\emph{Then,}}

\smallskip{}
i) The subspaces \textup{$\Pi_{k}$} are orthogonal with respect to
$\left\langle \cdot,\cdot\right\rangle {}_{(f,\eta)}$. Moreover, 

\[
\mathcal{P}_{k}=\Pi_{0}\oplus\Pi_{1}\oplus\cdots\oplus\Pi_{k}
\]
and
\begin{equation}
\mathbb{H}_{(f,\eta)}=\bigoplus_{k=0}^{\infty}\Pi_{k}.\label{eq: proof of consistency}
\end{equation}

\smallskip{}
ii) The subspace $\Pi_{k}$ is an irreducible AL(m)-invariant subspace
of the space $\mathcal{P}_{k}$. The polynomials 
\begin{equation}
\pi_{k,j,\ell}(R(\boldsymbol{x}),\boldsymbol{U}(\boldsymbol{x}))=R(\boldsymbol{x})^{k-2j}s_{j,k-2j}(R(\boldsymbol{x}))\Psi_{k-2j,\ell}(\boldsymbol{U}(\boldsymbol{x})),\label{eq:Forme explicite des polyn=0000F4mes Pi_k,j,l}
\end{equation}
with $(j,\ell)\in B_{k}$ = $\{j=0,\ldots,[k/2],\ell=1,\ldots,e_{m}(k-2j)\}$
form a CONB for \textup{$\Pi_{k}$ }\textup{\emph{with respect to
}}$\left\langle \cdot,\cdot\right\rangle {}_{(f,\eta)}$. Here $[z]$
denotes the integer part of z. Moreover \textup{$\Pi_{k}$} is of
dimension $d_{m}(k)$ = $\sum_{j=0}^{[k/2]}e_{m}(k-2j)$ = $C_{k}^{m+k-1}$.
Finally each \textup{$\Pi_{k}$} can be further decomposed as

\begin{equation}
\Pi_{k}=\bigoplus_{j=0}^{[k/2]}\Pi_{k,k-2j},\label{eq: the decomposition of Q-UIR decomposition}
\end{equation}
where $\Pi_{k,j}$ is spanned by $\{\pi_{k,j,\ell}(R(\boldsymbol{x}),\boldsymbol{U}(\boldsymbol{x})),\ell=1,\ldots,e_{m}(k-2j)\}$
and is an irreducible $\mathcal{O}(m)$-invariant subspace of $\Pi_{k}$
whose dimension is $e_{m}(k-2j)$.
\end{thm}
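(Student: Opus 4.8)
The plan is to establish the theorem first for the spherical representative $\eta_{0}=(\mathbf{0},\mathbf{I}_{m})$ and then transport it to an arbitrary $\eta$ by affine equivariance. Writing $\sigma_{\eta}(\boldsymbol{y})=\mathbf{V}^{1/2}\boldsymbol{y}+\boldsymbol{\mu}$, the change of variables $\boldsymbol{x}=\sigma_{\eta}(\boldsymbol{y})$ gives $\langle p,q\rangle_{(f,\eta)}=\langle p\circ\sigma_{\eta},q\circ\sigma_{\eta}\rangle_{(f,\eta_{0})}$, and $p\mapsto p\circ\sigma_{\eta}$ is a degree-preserving linear automorphism of each $\mathcal{P}_{k}$; since $\gamma\mapsto\gamma\circ\sigma_{\eta}$ also permutes $AL(m)$, this map carries the defining relations \eqref{eq:3.6} for $\eta$ onto those for $\eta_{0}$, so $\Pi_{k}(\eta)=\{p:\,p\circ\sigma_{\eta}\in\Pi_{k}(\eta_{0})\}$. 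It therefore suffices to build, for $\eta_{0}$, an orthonormal basis with the stated properties and to pull it back through $\sigma_{\eta}^{-1}$, which is an isometry $\mathbb{H}_{(f,\eta_{0})}\to\mathbb{H}_{(f,\eta)}$; in polar coordinates this pullback is exactly the substitution $\boldsymbol{x}\mapsto(R(\boldsymbol{x}),\boldsymbol{U}(\boldsymbol{x}))$ with $R(\boldsymbol{x})=\|\mathbf{V}^{-1/2}(\boldsymbol{x}-\boldsymbol{\mu})\|$, which produces the explicit form $\pi_{k,j,\ell}=R^{k-2j}s_{j,k-2j}(R)\,\Psi_{k-2j,\ell}(\boldsymbol{U})$ asserted in the theorem.

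For part~i) I would induct on $k$, assuming $\mathcal{P}_{k-1}=\Pi_{0}\oplus\cdots\oplus\Pi_{k-1}$ (the case $k=1$ being $\Pi_{0}=\mathcal{P}_{0}$). Because affine maps send polynomials of degree $\le k-1$ to polynomials of degree $\le k-1$ and $\gamma=\mathrm{id}$ already recovers each $\Pi_{k'}$, the linear span of $\{p_{k'}\circ\gamma:\,0\le k'<k,\ p_{k'}\in\Pi_{k'},\ \gamma\in AL(m)\}$ equals $\mathcal{P}_{k-1}$ exactly; so \eqref{eq:3.6} says precisely that $\Pi_{k}=\mathcal{P}_{k}\ominus\mathcal{P}_{k-1}$ (orthogonal complement for $\langle\cdot,\cdot\rangle_{(f,\eta_{0})}$). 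This at once yields mutual orthogonality of the $\Pi_{k}$ and the chain $\mathcal{P}_{k}=\Pi_{0}\oplus\cdots\oplus\Pi_{k}$. Finally, Assumption~A provides a moment generating function, hence Carleman's condition, so the polynomials $\bigcup_{k}\mathcal{P}_{k}$ are dense in $L^{2}(f,\eta_{0})$; passing to closures gives \eqref{eq: proof of consistency}, which transports to general $\eta$.

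For part~ii), the core step is to exhibit the CONB. In polar coordinates $\boldsymbol{y}=r\boldsymbol{u}$, the classical decomposition of homogeneous polynomials into solid spherical harmonics lets me write any $p\in\mathcal{P}_{k}$ as $\sum_{t=0}^{k}\sum_{\ell=1}^{e_{m}(t)}r^{t}q_{t,\ell}(r^{2})\Psi_{t,\ell}(\boldsymbol{u})$ with $q_{t,\ell}$ a polynomial of degree $\le[(k-t)/2]$. By the stochastic representation \eqref{eq:3.1} ($R$ independent of $\boldsymbol{U}$) and the polar density \eqref{eq:3.2}, the inner product $\langle\cdot,\cdot\rangle_{(f,\eta_{0})}$ factorizes: the angular factor is $\langle\Psi_{t,\ell},\Psi_{t',\ell'}\rangle_{d\omega_{m}}=\delta_{tt'}\delta_{\ell\ell'}$, and on the diagonal the radial factor is precisely the bilinear form in \eqref{eq:3.4} with $i=t$. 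Hence $s_{j,t}(r)$ orthonormalizes the radial part, so $\{R^{k-2j}s_{j,k-2j}(R)\Psi_{k-2j,\ell}(\boldsymbol{U}):(j,\ell)\in B_{k}\}$ is orthonormal and, after re-indexing $t=k'-2j$, the union of these families over $k'\le k$ spans $\mathcal{P}_{k}$; therefore, for each $k$, it is a CONB of $\mathcal{P}_{k}\ominus\mathcal{P}_{k-1}=\Pi_{k}$, which is the asserted basis after the pullback of the first paragraph. Counting gives $d_{m}(k)=|B_{k}|=\sum_{j=0}^{[k/2]}e_{m}(k-2j)$; and since the chain in part~i) identifies $\Pi_{k}$, linearly, with the degree-$k$ homogeneous polynomials in $m$ variables, $d_{m}(k)=C_{k}^{m+k-1}$ (equivalently, sum \eqref{eq:3.3}).

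It remains to address invariance and irreducibility. Projection onto the top-degree homogeneous part identifies $\Pi_{k}$ with the quotient $\mathcal{P}_{k}/\mathcal{P}_{k-1}$ as an $AL(m)$-module, the affine action descending to the $GL(m)$-action on degree-$k$ homogeneous polynomials; that module is $GL(m)$-irreducible, so $\Pi_{k}$ is an irreducible $AL(m)$-invariant subspace of $\mathcal{P}_{k}$, and moreover the family is equivariant, $\Pi_{k}(\gamma^{*}(\eta))=\Pi_{k}(\eta)\circ\gamma^{-1}$. For the refinement \eqref{eq: the decomposition of Q-UIR decomposition}, rotations $\mathbf{O}\in\mathcal{O}(m)$ fix $R$ and act linearly on $\boldsymbol{U}$, so for each $j$ the subspace $\Pi_{k,k-2j}$ spanned by $\{\pi_{k,j,\ell}:1\le\ell\le e_{m}(k-2j)\}$ is $\mathcal{O}(m)$-invariant and $\mathcal{O}(m)$-irreducible (as $E_{m}(k-2j)$ is, by \cite{Helgason1984,Axler2001}), has dimension $e_{m}(k-2j)$, and these summands are $\langle\cdot,\cdot\rangle_{(f,\eta)}$-orthogonal by construction, so the orthonormal families already built realise the decomposition. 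I expect the delicate point to be the precise meaning of ``$AL(m)$-invariant'' for $\Pi_{k}$: composition with an affine map introduces strictly lower-degree terms, so $h\in\Pi_{k}$ does \emph{not} literally give $h\circ\gamma\in\Pi_{k}$; the invariance must be read through the graded structure $\mathcal{P}_{k}/\mathcal{P}_{k-1}$ (equivalently, through how the flag $\mathcal{P}_{0}\subset\mathcal{P}_{1}\subset\cdots$ and the map $\eta\mapsto\Pi_{k}(\eta)$ transform), and one must verify this is the covariance under which the CONB is carried to a rotation of itself — the very covariance that later yields affine-linear invariance of $\mathcal{Q}_{K}$. A minor secondary point is justifying the $L^{2}$-density of polynomials from Assumption~A.
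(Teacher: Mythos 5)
Your construction is correct and follows the same overall route as the paper: reduce to the spherical case $\eta_{0}=(\mathbf{0},\mathbf{I}_{m})$ by the isometry $p\mapsto p\circ\sigma_{\eta}$, observe that the span of $\{p_{k'}\circ\gamma\}$ over $k'<k$ and $\gamma\in AL(m)$ is exactly $\mathcal{P}_{k-1}$ so that $\Pi_{k}=\mathcal{P}_{k}\ominus\mathcal{P}_{k-1}$, and then build the CONB from the polar factorization of $\langle\cdot,\cdot\rangle_{(f,\eta_{0})}$ into the spherical-harmonic basis of $E_{m}(t)$ and the radial polynomials $s_{j,t}$ orthonormalized by \eqref{eq:3.4}. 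The paper packages this second step as an explicit Lemma giving the expansion coefficients $a_{j,i,\ell}(\gamma)$ of an arbitrary $p\in\mathcal{P}_{k}$ for \emph{every} $\gamma\in AL(m)$ (an object it reuses later), whereas you only need $\gamma=e$; that is a harmless economy. The genuine divergence is in the irreducibility of $\Pi_{k}$: the paper argues via Schur's lemma from the claim that $\Pi_{k}$ contains a unique (up to scalars) rotation-invariant element — an argument that is terse and, as stated, delicate for odd $k$ where no such element exists — while you identify $\Pi_{k}$ with the quotient $\mathcal{P}_{k}/\mathcal{P}_{k-1}\cong\mathrm{Sym}^{k}$, on which translations act trivially and $GL(m)$ acts irreducibly; this is cleaner and also makes precise the sense in which $\Pi_{k}$ is ``$AL(m)$-invariant'' (the literal Section~2 definition fails for $\Pi_{k}$, $k\geq1$, since composition with an affine map introduces lower-degree terms — a point you rightly flag and that the paper's ``by construction'' glosses over). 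Your justification of the $L^{2}$-density of polynomials via the moment generating function is likewise more explicit than the paper's one-line appeal to density. The $\mathcal{O}(m)$-refinement \eqref{eq: the decomposition of Q-UIR decomposition} and the dimension count agree with the paper's treatment.
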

Beside giving the  expression (\ref{eq:Forme explicite des polyn=0000F4mes Pi_k,j,l}) for the $\pi_{k,j,\ell}(\cdot;\eta)$,
this theorem is crucial for at least two other points. First, (\ref{eq: proof of consistency})
ensures that an embedding family of the form (\ref{eq:2.5}) with
$K$ properly chosen and with (\ref{eq:Forme explicite des polyn=0000F4mes Pi_k,j,l})
as the elements in the basis, can approximate almost any alternatives. Thus the test
can be made adaptively omnibus; see Remark \ref{rem:remarque sur omnibusite}.
Second, as will be seen in Section \ref{sec: Section 4}, (\ref{eq: the decomposition of Q-UIR decomposition})
provides conditions ensuring the invariance of Dx tools. Note also
that the differentiability of the $\pi_{k,j,\ell}(\cdot;\eta)$ required
to apply \cite{Klar2000}'s results, follows from the structure in
(\ref{eq:Forme explicite des polyn=0000F4mes Pi_k,j,l}). 

\subsection{The test statistic}\label{subsec:The-test-statistic 3.2}

The smooth test for null hypothesis (\ref{eq:2.1}) can now be 
explicited. Estimate $\eta$ by the mle $\hat{\eta}=(\hat{\boldsymbol{\mu}},\hat{\mathbf{V}}^{-1})$
with estimated information matrix $\boldsymbol{\mathcal{J}}_{\hat{\eta}}$.
Set $\hat{\boldsymbol{Y}}_{i}$ = $\hat{\mathbf{V}}^{-1/2}(\boldsymbol{X}_{i}-\hat{\boldsymbol{\mu}})$
and compute $\hat{R}_{i}$ = $\Vert\hat{\boldsymbol{Y}}_{i}\Vert$,
$\hat{\boldsymbol{U}}_{i}$ = $\hat{\boldsymbol{Y}}_{i}/\Vert\hat{\boldsymbol{Y}}_{i}\Vert$.
Inject these into the $\pi_{k,j,\ell}(\hat{R}_{i},\hat{\boldsymbol{U}}_{i})$
to compute $\bar{\pi}_{k,j,\ell}=n^{-1}\sum_{i=1}^{n}\pi_{k,j,\ell}(\hat{R}_{i},\hat{\boldsymbol{U}}_{i})$.
Group these into vector $\bar{\boldsymbol{\pi}}(\hat{\eta})$ of (\ref{eq:2.6})
and compute $\mathbf{J}_{\hat{\eta}}$ . Test statistic (\ref{eq:2.7})
can be obtained. Null hypothesis (\ref{eq:2.1})
is rejected at approximate level $\alpha$ if $\mathcal{Q}_{K}$ is
greater than the $1-\alpha$-th quantile of the reference distribution,
e.g. the Monte Carlo approximation or the $\chi_{\nu_{\mathcal{Q}}}^{2}$
distribution with \emph{$\nu_{\mathcal{Q}}$} = $\sum_{k=k_{min}}^{K}C_{k}^{m+k-1}$.

\smallskip{}

\begin{rem}
\label{rem: remark choix de lamnda}As stated in Section \ref{sec:Sec 2},
to ensure invariance care must be taken in selecting the elements
of $\bar{\boldsymbol{\pi}}(\hat{\eta})$. First, any
$\pi_{k,j,\ell}(\cdot,\cdot)$ that is a linear combination of some
of the $\partial\log f(\boldsymbol{X};\eta)/\partial\eta_{a}$  must be excluded because
they will contribute nothing to the test statistic. Next, irreducibility
of $\Pi_{k}$ dictates that to retain invariance, when $\pi_{k,j,\ell}(\cdot,\cdot)$
has been excluded, then all other $\{\pi_{k,j^{\prime},\ell^{\prime}}(\cdot,\cdot)$
$(j^{\prime},\ell^{\prime})\in B_{k}\}$ must also be excluded.
 \end{rem}

\smallskip{}

\begin{rem}
\label{rem:remarque sur omnibusite} One
difficulty in applying any smooth test resides in selecting the value
of $K$. For univariate data, \cite{Ledwina1994}'s data-driven smooth test first chooses an integer $d(n)$, performs
a selection procedure to get a ``good'' $\hat{K}\in[1,...,d(n)]$,
and finally computes the associated test statistic. The framework where $d(n)\rightarrow\infty$
leads to an adaptively omnibus test and is referred to as the\emph{
}infinite horizon case \cite[p.~102]{Thas2010}. The
available theory about the rate of divergence of $d(n)$ is impressive
but hardly translates into a precise value, as it is expressed
in terms of $o(\cdot)$. Thus $d(n)$ is
in practice fixed by external considerations,
the finite horizon framework. Fortunately, simulations show that
the power of the data-driven smooth test stabilizes rapidly as $d(n)$
increases.Hence in practice both frameworks lead to the same modus operandi 
as long as $d(n)$ is not too small.

The methodology that derives from Theorem \ref{thm:3.1} allows to
compute the smooth test up to any desired $d(n)$, so it is in principle
possible to develop both infinite and finite horizon data-driven tests
in our context. Here we only sketch a simple invariant adaptation of \cite{Ledwina1994}'s approach and focus on the finite horizon case.
Suppose $d(n)$ is given. Define 
\begin{flalign}
\hat{K} & =\min\{k:1\leq k\leq d(n),\mathcal{Q}_{k}-\left[\sum_{j=1}^{k}\textrm{Card}(B_{j})\right]\log(n)\label{eq: Choix de K_hat}\\
 & \geq\mathcal{Q}_{\ell}-\left[\sum_{j=1}^{\ell}\textrm{Card}(B_{j})\right]\log(n),\ell=1,...,d(n)\}.\nonumber 
\end{flalign}
The asymptotic reference distribution of
test statistic $\mathcal{Q}_{\hat{K}}$ is a $\chi^{2}$ with $C_{1}^{m}$
degrees of freedom. As $d(n)$ is allowed to increase, $\mathcal{Q}_{\hat{K}}$
spreads its power in a data-driven fashion over an increasing number
of directions. The exploration of this and other scenarios is
left for future work ; see Section \ref{sec:Conclusions Sec 7}.

\end{rem}

\section{Invariance and the $\mathcal{Q}=\mathcal{UIR}$ decomposition}\label{sec: Section 4}

We now take a closer look at matrix $(\mathbf{I}_{v_{\mathcal{Q}}}-\mathbf{J}_{\eta}\boldsymbol{\mathcal{J}}_{\eta}^{-1}\mathbf{J}_{\eta}^{T})^{-1}$
in (\ref{eq:2.7}).
It is explained in Section \ref{subsec:8.2 Derivation-of-the decomposition}
that this is a matrix of constants and that after permuting the $\pi_{k,j,\ell}(\cdot,\cdot)$
in (\ref{eq:2.6}) according to the values of $k-2j$, test statistic
(\ref{eq:2.7}) can be conveniently written as (\ref{eq: final value of the test statistic}),
which leads to 
\begin{align}
\mathcal{Q}_{K} & =\mathcal{Q}_{K}(\boldsymbol{X},\hat{\eta})=\mathcal{U}_{K}+\mathcal{I}_{K}+\mathcal{R}_{K},\label{eq:alternate expression for the test statistic}
\end{align}
which we refer to as the $\mathcal{Q}=\mathcal{UIR}$ decomposition,
where
\begin{alignat}{1}
\mathcal{U}_{K} & =n\left\Vert \boldsymbol{\bar{\pi}}_{\mathcal{U}}\right\Vert ^{2},\nonumber \\
\mathcal{I}_{K} & =n\left\{ \left\Vert \boldsymbol{\bar{\pi}}_{\mathcal{I},1}\right\Vert ^{2}+d_{1}\textrm{tr}(\mathbf{c}_{1}\mathbf{c}_{1}^{T}\bar{\mathfrak{I}}_{1}\bar{\mathfrak{I}}_{1}^{T})+\left\Vert \boldsymbol{\bar{\pi}}_{\mathcal{I},2}\right\Vert ^{2}+d_{2}\textrm{tr}(\mathbf{c}_{2}\mathbf{c}_{2}^{T}\bar{\mathfrak{I}}_{2}\bar{\mathfrak{I}}_{2}^{T})+\left\Vert \boldsymbol{\bar{\pi}}_{\mathcal{I},3}\right\Vert ^{2}\right\} ,\label{eq: U I R decomp}\\
\mathcal{R}_{K} & =n\left\{ \left\Vert \boldsymbol{\bar{\pi}}_{\mathcal{R}}\right\Vert ^{2}+d_{0}\textrm{tr}(\mathbf{c}_{0}\mathbf{c}_{0}^{T}\boldsymbol{\bar{\pi}}_{\mathcal{R}}\boldsymbol{\bar{\pi}}_{\mathcal{R}}^{T})\right\} ,\nonumber 
\end{alignat}
are asymptotically independent
$\chi^{2}$ with degrees of freedom $\nu_{\mathcal{U}},\nu_{\mathcal{I}},\nu_{\mathcal{R}}$
given in (\ref{subsec:8.2 Derivation-of-the decomposition}) under $H_{0}$. Note
that, beyond the moments of $R^{2}$ required for the $s_{j,i}(\cdot)$,
the computation of this $\mathcal{Q}=\mathcal{UIR}$ decomposition
conveniently requires only the $2+2\left[K/2\right]+\left[(K+1)/2\right]$
quantities described in Section \ref{subsec:8.2 Derivation-of-the decomposition}
to get the vectors $\sqrt{d_{0}}\text{\ensuremath{\mathbf{c}}}_{0}$,$\sqrt{d_{1}}\text{\ensuremath{\mathbf{c}}}_{1}$
and $\sqrt{d_{2}}\text{\ensuremath{\mathbf{c}}}_{2}$.

The elements $\pi_{k,0,\ell}(r,\boldsymbol{u})$ in $\mathcal{U}_{K}$
are proportional to $r^{k}\Psi_{k,\ell}(\boldsymbol{u})$. The components
of $\boldsymbol{\bar{\pi}}_{\mathcal{U}}$ are thus weighted averages
of polynomials in $\hat{\boldsymbol{U}}_{i}$.
Note that up to a constant, the $\pi_{k,0,\ell}(r,\boldsymbol{u})$
are identical for any EC distribution and thus cannot be related
to the distribution of \emph{$R^{2}$}. Hence $\mathcal{U}_{K}$ 
serves to detect departures from the uniformity of \emph{$\boldsymbol{U}$}.
 When such departures are detected, the
true density of the data may not be constant on ellipses centered at
$\boldsymbol{\mu}$ and a model incorporating this feature should
be seeked, e.g. a distribution where $\boldsymbol{U}$ possesses
a more complex density on $\Omega_{m}$.

Similarly, because $\Psi_{0,1}(\boldsymbol{u})\equiv1$, the elements
of $\mathcal{R}_{K}$ are $\pi_{k,k/2,1}(r,\boldsymbol{u})$ = $s_{k,0}(r)$
with $k$ even. Hence this component,
which under $H_{0}$ is asymptotically $\chi_{\nu_{\mathcal{R}}}^{2}$,
serves to detect departures from the distribution of the ``radius''
$R$. These departures may then be identified,
visually or otherwise, and the null model corrected accordingly. $\mathcal{R}_{K}$
can also be used when, as in \cite{Kariya1995},  it is desired to have a test whose 
power is directed toward alternatives that are also
elliptic about the unknown $\boldsymbol{\mu}$.

Finally, consider the elements in $\mathcal{I}_{K}$. These are
products of polynomials in \emph{R} with hyperspherical harmonics in \emph{$\boldsymbol{U}$}.
Thus $\mathcal{I}_{K}$, which is approximately $\chi_{\nu_{\mathcal{I}}}^{2}$,
detects correlations between\emph{ $\boldsymbol{U}$} and \emph{R}.
When this occurs, the structure of the true density is complicated
and a user could look at more involved densities, e.g. skew-densities
of some sort. 

Summing up, each component of the $\mathcal{Q}=\mathcal{UIR}$ decomposition
can detect a meaningful, in terms of stochastic representation (\ref{eq:3.1}),
type of departure from the null density.

$AL(m)$-invariance of $\mathcal{Q}_{K}$ and each
of its components follows from the following considerations that hold
provided all $\pi_{k,j,\ell}(\cdot,\cdot)$ spanning each $\Pi_{k}$,
and thus each $\Pi_{k,k-2j}$ in (\ref{eq: the decomposition of Q-UIR decomposition}),
appear in (\ref{eq:2.6}). For any $\gamma=(\mathbf{A},\boldsymbol{b})\in AL(m)$,
write $\boldsymbol{X}_{i}^{*}=\gamma(\mathbf{X}_{i})$, $\hat{\eta}^{*}=\gamma^{*}(\hat{\eta})=(\mathbf{\hat{\boldsymbol{\mu}}}_{*},\mathbf{\hat{V}}_{*}^{-1})$
defined in Section \ref{sec:Sec 2}. Also define $\hat{\boldsymbol{Y}}_{i}^{*}=\hat{\mathbf{V}}_{*}^{-1/2}(\boldsymbol{X}_{i}^{*}-\hat{\boldsymbol{\mu}}_{*})$
and similarly for $\hat{R}_{i}^{*}$ and $\hat{\boldsymbol{U}}_{i}^{*}$.
Observe first that $\hat{R}_{i}=\hat{R}_{i}^{*}$ so that $s_{k,0}(\hat{R}_{i})=s_{k,0}(\hat{R}_{i}^{*})$
and component $\mathcal{R}_{K}$, whose elements span the $\Pi_{k,0}$
of Theorem \ref{thm:3.1}, is trivially $AL(m)$-invariant. For component
$\mathcal{U}_{K}$, whose elements span the $\mathcal{O}(m)$-invariant
$\Pi_{k,[k/2]}$, notice that $\pi_{k,0,\ell}(\hat{R}_{i}^{*},\hat{\boldsymbol{U}}_{i}^{*})=\pi_{k,0,\ell}(\hat{R}_{i},\mathbf{O}^{*}\hat{\boldsymbol{U}}_{i})$,
where \textcolor{black}{${\mathbf{O}^{*}=(\mathbf{A}\hat{\mathbf{V}}^{-1}\mathbf{A}^{T})^{-1/2}\mathbf{A}\hat{\mathbf{V}}^{-1/2}}$} $\in\mathcal{O}(m)$. 
It follows that $$(\pi_{k,0,\ell}(\hat{R}_{i},\mathbf{O}^{*}\hat{\boldsymbol{U}}_{i}),\ell=1,...,e_{m}([k/2]))^{T}=\mathbf{O}^{**}(\pi_{k,0,\ell}(\hat{R}_{i},\hat{\boldsymbol{U}}_{i}),\ell=1,...,e_{m}([k/2]))^{T},$$
for some $\mathbf{O}^{**}\in\mathcal{O}(e_{m}([k/2])$ by standard
properties of Wigner $d$-matrices. Hence each $(\bar{\pi}_{k,0,\ell},\ell=1,...,e_{m}([k/2]))$,
and thus $\mathcal{U}_{K}$, is invariant to $AL(m)$-transformations.
A similar argument applies to the parts $\left\Vert \boldsymbol{\bar{\pi}}_{\mathcal{I},j}\right\Vert ^{2},j=1,2,3$
of component $\mathcal{I}_{K}$ which span the intermediate $\mathcal{O}(m)$-invariant $\Pi_{k,k-2j}$. Finally, consider the term
$\textrm{tr}(\mathbf{c}_{a}\mathbf{c}_{a}^{T}\bar{\mathfrak{I}}_{a}\bar{\mathfrak{I}}_{a}^{T}),a=1,2$.
Because $\bar{\mathfrak{I}}_{a}$ averaged over all $(\hat{R}_{i}^{*},\hat{\boldsymbol{U}}_{i}^{*})$
= $\bar{\mathfrak{I}}_{a}\mathbf{O}_{a}^{**},$ it follows that $\mathcal{I}_{K}$
is $AL(m)$-invariant. Then so is global statistic $\mathcal{Q}_{K}$
as $\mathcal{Q}_{K}(\gamma(\boldsymbol{X}),\gamma^{*}(\hat{\eta}))=\mathcal{Q}_{K}(\boldsymbol{X},\hat{\eta})$.

To be diagnostic, the components in (\ref{eq:alternate expression for the test statistic})
must be further processed and, in particular, \cite{Henze1997} shows that
they must be scaled. This operation must be done with some care in
order for the scaled statistics to retain both $AL(m)$-invariance
and their meaningful interpretations. One possibility is the following;
let $\hat{\boldsymbol{\Sigma}}_{\mathcal{UU}}$ be a block diagonal
matrix where each block is the empirical covariance matrix of the
$\pi_{k,0,\ell}(\hat{R}_{i},\hat{\boldsymbol{U}}_{i})\in\Pi_{k,[k/2]}$
appearing in $\boldsymbol{\pi}_{\mathcal{U}}$. Define similarly $\hat{\boldsymbol{\Sigma}}_{\mathcal{II}},\text{\ensuremath{\hat{\boldsymbol{\Sigma}}_{\mathcal{RR}}}}$
for $\boldsymbol{\pi}_{\mathcal{I}},\boldsymbol{\pi}_{\mathcal{R}}.$
The scaled components $\mathcal{U}_{K}^{(s)}=n\boldsymbol{\bar{\pi}}_{\mathcal{U}}^{T}\hat{\boldsymbol{\Sigma}}_{\mathcal{UU}}^{-1}\boldsymbol{\bar{\pi}}_{\mathcal{U}}$,
$\mathcal{I}_{K}^{(s)}=n\boldsymbol{\bar{\pi}}_{\mathcal{I}}^{T}\hat{\boldsymbol{\Sigma}}_{\mathcal{II}}^{-1}\boldsymbol{\bar{\pi}}_{\mathcal{I}}$
and $\mathcal{R}_{K}^{(s)}=n\boldsymbol{\bar{\pi}}_{\mathcal{R}}^{T}\hat{\boldsymbol{\Sigma}}_{\mathcal{RR}}^{-1}\boldsymbol{\bar{\pi}}_{\mathcal{R}}$ 
are $AL(m)$-invariant and diagnostic. See Section \ref{sec:sec 6 Usefulness of Dx}
for some examples of their usefulness. 

\begin{rem}
\label{rem:Show the k-min}Statistic $\mathcal{Q}_{1}$
is a function of $\pi_{1,0,\ell}(r,u)\propto ru_{\ell}$ and is thus
basically a distance between $\bar{\boldsymbol{X}}$
and the mle $\hat{\boldsymbol{\mu}}$. As such, it can be useful in
discriminating $H_{0}$ but provides little Dx information in any
senses associated with the $\mathcal{Q}=\mathcal{UIR}$ decomposition. In the above, this
term is bundled into component $\mathcal{I}_{K}$. Similarly $\mathcal{Q}_{2}-\mathcal{Q}_{1}$,
which is spread over $\mathcal{I}_{K}$ and $\mathcal{R}_{K}$, is
related to $\textrm{tr}(\hat{\boldsymbol{V}}^{-1}\mathbf{S})$ ($\mathbf{S}$
is the empirical covariance matrix of the $\boldsymbol{X}_{i})$ and
again can be useful in discriminating $H_{0}$ but otherwise provides
little Dx insights. \cite{Csorgo1989} advises that one should use a powerful
test for the null hypothesis, followed by less formal procedures when
the null is rejected. Thus it can be a reasonable strategy to consider only the elements $\mathcal{Q}_{k}-\mathcal{Q}_{2}$
at the Dx stage.\end{rem}

\section{Applications of the smooth test methodology to some EC distributions}\label{sec:Section 5 applications}

The above smooth GoF test has the
desirable features listed in Section \ref{sec:Introduction}:
with $K$ chosen appropriately, it can
be made adaptively omnibus; it possesses Dx capabilities; and 
it is $AL(m)$-invariant. In addition, its behavior under both $H_{0}$
and $H_{1}$ can be conveniently approximated. In this section, we
develop the test strategy for two popular EC distributions. Two others are
treated in Appendix \ref{sec:Appeb-C}.

\subsection{The multivariate normal (MVN) distribution}\label{subsec:Sous section sur la MVN}

Consider the important problem of testing
the null hypothesis $H_{0}$ that a sample $\boldsymbol{X}_{1},\ldots,\boldsymbol{X}_{n}$
arises from an $m$-dimensional MVN distribution with density 
\[
f(\boldsymbol{x};\eta)=(2\pi)^{-m/2}\det(\mathbf{V}^{-1})^{1/2}\exp\left\{ -(\boldsymbol{x}-\boldsymbol{\mathbf{\mu}})^{T}\mathbf{V}^{-1}(\boldsymbol{x}-\boldsymbol{\mu})/2\right\} .
\]
Under $H_{0}$, $R^{2}$ has a $\chi_{m}^{2}$ distribution with $j-$th
moment $2^{j}\Gamma(m/2+j)/\Gamma(m/2)$. The elements of an orthonormal
basis satisfying (\ref{eq:3.4}) are, via (\ref{eq:3.5}),

\begin{equation}
s_{j,i}(r)=(-1)^{j}\sqrt{\frac{j!\:\Gamma(m/2)}{2^{i}\:\Gamma(m/2+j+i)}}\,\,L_{j}^{m/2+i-1}(r^{2}/2),\label{eq:4.1}
\end{equation}
where $L_{j}^{\alpha}(\cdot)$ is the \emph{j}\textendash th generalized
Laguerre polynomial of order $\alpha$; see Appendix~\ref{sec:Appeb-B} for
MATHEMATICA commands to generate these quantities along with those
for $\{\Psi_{k,j}(\cdot),j=1,\ldots,e_{m}(k)\}$. Tables \ref{tab:Table des polynomes pour m =00003D 2}
and \ref{tab:Table des polynomes pour m =00003D 3} list all $\pi_{k,j,\ell}(r,\boldsymbol{u})$
for $k=3,4,5$ and $m=2,3$.

Next, it is easy to see that for the MVN, $\zeta(\cdot)=1$ (defined
in Section \ref{subsec:8.2 Derivation-of-the decomposition}) and
$\mathbf{c}_{0},\mathbf{c}_{1},\mathbf{c}_{2}$ vanish so that the
$\mathcal{Q}=\mathcal{UIR}$ decomposition (\ref{eq: U I R decomp})
takes a particularly simple form. To get its explicit expression,
first compute $\hat{\mathbf{Y}}_{i}$, $\hat{R}_{i}$ and $\hat{\boldsymbol{U}}_{i}$
with $\hat{\boldsymbol{\mu}}$ = $\bar{\boldsymbol{X}}$ and $\hat{\mathbf{V}}^{-1}$
= $\mathbf{S}^{-1}$, where $(\bar{\boldsymbol{X}},\mathbf{S)}$ is
the mle of \emph{$(\boldsymbol{\mu},\mathbf{V})$}. Then all $\bar{\pi}_{k,j,\ell}=0$
when $k=1,$ 2 and in view of Remark \ref{rem: remark choix de lamnda},
this leads to $\mathcal{Q}_{1} = \mathcal{Q}_{2}=0$. Upon writing $\mathcal{C}_{k,j}^{2}=n\sum_{\ell=1}^{e_{m}(k-2j)}(\bar{\pi}_{k,j,\ell})^{2}$,
(\ref{eq:2.7}) becomes: 
\begin{equation}
\mathcal{Q}_{K}=\sum_{k=3}^{K}\sum_{j=0}^{[k/2]}\mathcal{C}_{k,j}^{2},\label{eq:4.3}
\end{equation}
which is asymptotically  $\chi_{v_{\mathcal{Q}}}^{2}$
under $H_{0}$, where $\nu_{\mathcal{Q}}=\sum_{k=3}^{K}C_{k}^{m+k-1}$.
Also
\begin{equation}
\mathcal{R}_{K}=\sum_{\substack{k=3\\k:\textrm{even}}}^{K}\mathcal{C}_{k,k/2}^{2}\label{eq:Expression de R_k}
\end{equation}
is, under $H_{0}$, asymptotically $\chi_{\nu_{\mathcal{R}}}^{2}$
where $\nu_{\mathcal{R}}=[(K-2)/2]$. Moreover,\begin{equation}
\mathcal{U}_{K}=\sum_{k=3}^{K}\mathcal{C}_{k,0}^{2}\label{eq:Exression de R3}
\end{equation}
has for reference distribution a
$\chi_{\nu_{\mathcal{U}}}^{2}$, where $\nu_{\mathcal{U}}=\sum_{k=3}^{K}e_{m}(k)$.
Finally, 
\begin{equation}
\mathcal{I}_{K}=\sum_{k=3}^{K}\sum_{j=1}^{[(k-1)/2]}\mathcal{C}_{k,j}^{2}.\label{eq:Expression de I_k}
\end{equation}
is approximately $\chi_{\nu_{\mathcal{I}}}^{2}$ with $\nu_{\mathcal{I}}=\nu_{\mathcal{Q}}-\nu_{\mathcal{U}}-\nu_{\mathcal{R}}$. 

To scale the components in the $\mathcal{Q}=\mathcal{UIR}$
decomposition, let $\hat{\mathbf{\boldsymbol{\Sigma}}}_{k,j}$ denote the empirical
covariance matrix of the $n$ random vectors $(\pi_{k,j,\ell}(\hat{R}_{i},\hat{\boldsymbol{U}}_{i}),\ell=1,\ldots e_{m}(k-2j))$
with empirical mean $\bar{\boldsymbol{\pi}}_{k,j}=(\bar{\pi}_{k,j,\ell},\ell=1,\ldots e_{m}(k-2j))$.
The scaled components are $(\mathcal{C}_{k,j}^{(s)})^{2}=n\bar{\boldsymbol{\pi}}_{k,j}^{T}\hat{\mathbf{\boldsymbol{\Sigma}}}_{k,j}^{-1}\bar{\boldsymbol{\pi}}_{k,j}$.
These $(\mathcal{C}_{k,j}^{(s)})^{2}$, bundled into $\mathcal{U}^{(s)},\mathcal{I}^{(s)}$
or $\mathcal{R}^{(s)}$ are diagnostic to identify the aspects of
the MVN not supported by the data. \textcolor{black}{The R package ECGoFTestDx computing
the above smooth test for the MVN has been deposited on 
CRAN}. An application to a  data set is shown in Appendix \ref{sec:Appeb-D}.

When \emph{m} = 1, $\Pi_{k}$ is spanned by $\pi_{k,[k/2],1}(r,u)$,
with $u=\pm1$. These are the Hermite polynomials of \cite{Rayner2009}'s smooth test of univariate normality. Thus our
approach generalizes their test.

For univariate distributions, the components of smooth test statistics
are often related to GoF tests that have been introduced from other
principles. The same occurs here and some components of $\mathcal{Q}_{K}$
turn out to be well-known GoF test statistics for the MVN. In particular,
\begin{flalign}
\mathcal{Q}_{4} & =\mathcal{C}_{3,0}^{2}+\mathcal{C}_{1,1}^{2}\qquad(=\mathcal{Q}_{3})\nonumber \\
 & +\mathcal{C}_{4,0}^{2}+\mathcal{C}_{4,1}^{2}+n(\bar{\pi}_{4,2,1})^{2}.\label{eq:4.5}
\end{flalign}
Inspection of these terms 
shows that  $\mathcal{R}_{4}=n(\bar{\pi}_{4,2,1})^{2}$ =
is $nb_{1,m}/6$ where $b_{1,m}$ is \cite{Mardia1970}'s 
multivariate measure of skewness. Moreover, $n(\bar{\pi}_{4,2,1})^{2}$
= $n(b_{2,m}-m(m+2))^{2}/(8m(m+2))$, where $b_{2,m}$ is \cite{Mardia1970}'s multivariate measure of kurtosis. These are popular measures
in multivariate analysis, whose usefulness in providing Dx information
arises by extending the meaning of their univariate counterparts:
in particular, $b_{1,m}$ vanishes under elliptical symmetry and $b_{2,m}$
quantifies the tails of the distribution. Consequently, they have
been used as GoF tests for the MVN and, in particular, \cite{Romeu1993} have concluded that tests based on them rank among the bests
in terms of power. \cite{Koizumi2009} have combined them (i.e.
$\mathcal{Q}_{3}+n(\bar{\pi}_{4,2,1})^{2}$) to get a multivariate
version of the popular Jarque-Bera test of univariate normality. 

\cite{Mardia1991} have developed a test of multivariate normality
as an example of some simplifications that occur with Rao\textquoteright s
score method when a suitable group structure exists. By brute force
calculations, they were able to obtain $\mathcal{Q}_{4}$ and state to have been unable to generalize to \emph{$K$}
$>$ 4. Here, by making use of recent advances in computational tools
for harmonic analysis, we can easily go beyond this limitation. Moreover,
as a by-product, we get for any \emph{$K$} the informative $\text{\ensuremath{\mathcal{Q}}=\ensuremath{\mathcal{UIR}}}$
decomposition. 

The idea of using the elements of stochastic representation (\ref{eq:3.1})
for testing MVN is not new. Many authors have proposed testing the
distribution of $R$ and/or $\boldsymbol{U}$ individually (see \cite{Koziol1986,Henze2002} for reviews), thus in effect crafting in an \emph{ad
hoc} fashion fragments of the $\text{\ensuremath{\mathcal{Q}}=\ensuremath{\mathcal{UIR}}}$
decomposition. \cite{Koziol1987}, followed by \cite{Rayner1988} and \cite{Best1988}, have taken a different route and proposed a smooth test based
on the fact that the multivariate normal density is the product of
\emph{m} univariate normal densities when $\mathbf{V}$ = $\mathbf{I}_{m}$.
For this particular case, a CONB can be obtained by the tensor product
of elements of CONB associated with each univariate distribution \cite[p.~51, Theorem~4.3]{Lancaster1969}.
 However, this approach breaks down for general
elliptical distributions, as in the following application, because
$\mathbf{V}$ = $\mathbf{I}$ is not associated with independence
\cite{Muirhead1982} and in view of the inconsistency
property of many EC distributions.

\subsection{The bivariate Laplace distribution}\label{subsec:Section bivariate Laplace}

There are several bivariate extensions of the univariate Laplace ;
we take here the variant discussed in \cite{Naik2006}
obtained from the power exponential distribution in Section \ref{sec:Introduction}
by setting $\alpha=1/2$. This distribution has larger tails than
the MVN. The $j-$th moment of $R^{2}$ is $\Gamma(2(j+1))$. We take
$K$= 5, a reasonable value, and from (\ref{eq:3.5}), the $s_{j,k-2j}(\cdot)$
required for $\mathcal{Q}_{5}$ are : $s_{0,1}(r)=(\sqrt{6})^{-1}$;
$s_{0,2}(r)=(2\sqrt{30})^{-1}$; $s_{0,3}(r)=(12\sqrt{35})^{-1}$;
$s_{0,4}(r)=(72\sqrt{70})$$^{-1}$; $s_{0,5}(r)=(720\sqrt{77})^{-1}$;
$s_{1,0}(r)=(r^{2}-6)/(2\sqrt{21})$; $s_{1,1}(r)=(r^{2}-20)/(4\sqrt{165})$;
$s_{1,2}(r)=(r^{2}-42)/(60\sqrt{42})$; $s_{1,3}(r)=(r^{2}-72)/(144\sqrt{665})$;
$s_{2,0}(r)=(7r^{4}-360r^{2}+1320)/(24\sqrt{10745})$ and $s_{2,1}(r)=(11r^{4}-10926r^{2}+12600)/(72\sqrt{225610})$.
Along with the expression for $\Psi_{k,\ell}(\cdot)$ in Section \ref{subsec:Sec3.1}
when $m=2$, it is an easy exercise to compute the $\pi_{k,j,\ell}(\cdot,\cdot)$.
Next, because $\zeta(r^{2})=r^{2}\times\sqrt{\frac{1}{r^{2}}}$, one
finds $\sigma_{1}=1$, $\sigma_{2}=6$ and
\begin{quote}
\begin{alignat*}{1}
\sqrt{d_{0}}\mathbf{c}_{0} & =(7)^{-1}\{-\sqrt{614},4\sqrt{\frac{10}{3}}\},\\
\sqrt{d_{1}}\mathbf{c}_{1} & =(\sqrt{1203})^{-1}\{\sqrt{4102},-4\sqrt{\frac{2051}{55}},48\sqrt{\frac{6}{55}}\},\\
\sqrt{d_{2}}\mathbf{c}_{2} & =(\sqrt{19})^{-1}\{-2\sqrt{35},4\}.
\end{alignat*}
\end{quote}
After computing the mle $\hat{\eta}=(\hat{\boldsymbol{\mu}},\hat{\mathbf{V}}^{-1})$
and in turn, $\hat{\boldsymbol{Y}}_{i}$ , $\hat{R}_{i}$ and $\hat{\boldsymbol{U}}_{i}$,
plug these into the $\pi_{k,j,\ell}(\hat{R}_{i},\hat{\boldsymbol{U}}_{i})$
to get the $\bar{\pi}_{k,j,\ell}$. Setting $\bar{\pi}_{\mathcal{U}}$ 
$=(\bar{\pi}_{3,0,1},\bar{\pi}_{3,0,2},\bar{\pi}_{4,0,1}$, $\bar{\pi}_{4,0,2},\bar{\pi}_{5,0,1},\bar{\pi}_{5,0,2})$,
$\bar{\pi}_{\mathcal{R}}=$ $(\bar{\pi}_{2,1,1},\bar{\pi}_{4,2,1}),$
$\bar{\pi}_{\mathcal{I},1}$ $=(\bar{\pi}_{1,0,1},\bar{\pi}_{3,1,1},\bar{\pi}_{5,1,1},\bar{\pi}_{5,2,1}$, $\pi_{1,0,2}, \bar{\pi}_{3,1,2})$,
$\bar{\pi}_{\mathcal{I},2}$ $=(\bar{\pi}_{2,0,1},\bar{\pi}_{4,1,1},\bar{\pi}_{2,0,2},\bar{\pi}_{4,1,2})$
and $\bar{\pi}_{\mathcal{I},3}=(\bar{\pi}_{5,1,2},\bar{\pi}_{5,2,2})$
yields all the elements required to compute the various test components.
Their degrees of freedom are those in Section \ref{sec: Section 4}.
For a GoF test for another bivariate Laplace distribution, see \cite{Fragiadakis2011}. 

\section{Performance of the $\mathcal{Q}=\mathcal{UIR}$ decomposition in
deriving Dx information}\label{sec:sec 6 Usefulness of Dx}

There can be many directions in which an alternative may depart from
a null model. The present Dx tool, based on the $\mathcal{Q}=\mathcal{UIR}$
decomposition, pertains to departures from the easily interpretable
representation (\ref{eq:3.1}). To evaluate
its performance, we need to determine how well it detects those departures,
ideally in a context where other departures coexist to complicate
matters. Performance here refers to the confidence allocated to the
Dx information extracted from a sample, i.e. sensibility and specificity.
As our tests are performed at a given level (here 5\%), specificity
is fixed and we concentrate on sensitivity, the probability that a
departure from representation (\ref{eq:3.1}) will be detected, which
is the power of the tests based on the components of the $\mathcal{Q}=\mathcal{UIR}$
decomposition.

To start building this confidence, a small experiment was performed.
We adopt the context of Section \ref{subsec:Sous section sur la MVN}
where the interest lies in assessing the null hypothesis of bivariate
normality. Because representation (\ref{eq:3.1}) is common to all
EC distributions, the results below could be representative of what
could be obtained with other EC null models and higher dimensions.
Samples were generated from thirty distributions taken
from \cite{Mecklin2005,Bogdan1999,Johnson1987}. Statistics $\mathcal{Q}_{K},\mathcal{U}_{K}^{(s)},\mathcal{I}_{K}^{(s)}$
and $\mathcal{R}_{K}^{(s)}$ , $(K=3,...,$12) were computed and compared
to their null Monte Carlo (based on 20,000 replications) approximation,
as explained in Remark \ref{rem:Remark 5 on Monte Carlo}. To
yield interesting power values,this was replicated 5,000 times and the sample size $n$ was adapted to
each alternative. 

We stress that our goal is to appreciate the usefulness of the Dx
procedure based of the $\mathcal{Q}=\mathcal{UIR}$ decomposition
and how these components relate to the global $\mathcal{Q}_{K}$.
It is \emph{not} to thoroughly compare the power of $\mathcal{Q}_{K}$
to its competitors, as we have made no attempt to optimize the choice
of $K$. Nevertheless, to offer some perspectives we did compute the
power of the BHEP test \cite{Baringhaus1988} 
with the
tuning parameter set at  1.41;  this test has been recommended in some simulation studies
\cite{Mecklin2005} but offers no Dx information.
We also computed the power of the multivariate Jarque \& Bera test
\cite{Koizumi2009} that does offer some Dx information via its
skewness-kurtosis components. 

The alternatives were chosen to somewhat resemble the bivariate normal
and to depart along one or two directions identifiable by the
$\mathcal{Q}=\mathcal{UIR}$ decomposition, in addition to other departures. To screen the alternatives
along these criteria, we generated 100 000 observations $(X_{1},X_{2})$
from each, transformed them into $(Y_{1},Y_{2})$ and then into $(R,\boldsymbol{U})$,
and then again $\boldsymbol{U}$ into $\theta=\arctan(U_{2}/U_{1})$.
Plots of the various univariate and bivariate densities were made, and various statistics and characteristics were
computed. We then excluded the densities where all $\mathcal{UIR}$
components are large, because the resulting knowledge (i.e. all is
wrong) is not easy to use in the iterative process of correcting a
null model after Dx analysis.

From the results from these thirty alternatives, we report here on
five that cover most of the behavior we have observed. Also, we report here
on the case $K=5$.

The first alternative is a member of the Khintchine family of distributions
\cite[Chap.~8]{Johnson1987}. Synthetic data from this distribution are
obtained by generating $Z\thicksim\Gamma(1.5,1)$ and then setting
\begin{alignat*}{1}
(X_{1},X_{2}) & =\sqrt{\frac{3\Gamma(1.5)}{\Gamma(1.5+2\times0.3998935)}}Z^{0.398935}\times2(V_{1}-0.5,V_{2}-0.5),
\end{alignat*}
where $V_{1},V_{2}$ are independent $U(0,1)$. In the above expression,
the various constants are such that the marginals of $(Y_{1},Y_{2})$
are nearly independent $N(0,1)$ and the marginal density of $R$
is approximately a $\sqrt{\chi_{2}^{2}}$ . However $\boldsymbol{U}$
is non-uniform, as the density of $\theta$
oscillates as $\sin(4\theta)$ with modes at $\pm\pi/4,\pm3\pi/4$. The density of $(Y_{1},Y_{2})$
has squarish contours but otherwise resembles the bivariate normal.
These oscillations induce a complicated relationship between $R$
and $\boldsymbol{U}$ which, at the first order, can be assimilated
to near independence, the regression of $R$ on $\theta$ involving
$\sin(k\theta)$, $\cos(k\theta)$, $k=1,...,3$ being nearly constant
with a Spearman's correlation coefficient ($\rho_{S}=-0.002)$ \footnote{Pearson's correlation coefficient is not a good measure of the dependency
between these quantities A normalized  version of mutual
information seems better
adapted but we have found its scale difficult to appreciate from
one problem to another. We have settled on Spearman's correlation
coefficient mainly because it remains the same between $\theta$ and
$R$ or $R^{2}$ and because a sophisticated user can in general make
some intuitive sense of differences between its values}. Because of these, we expect :
 $\textrm{power}(\mathcal{R}_{5}^{(s)})\approx\textrm{power}(\mathcal{I}_{5}^{(s)})\approx5\%<\textrm{power}(\mathcal{U}_{5}^{(s)}).$
Note that this, and further, expectation derives from the 100 000 samples
generated when screening the alternatives.
This was blinded in the following simulation study. The sample size
is $n=400.$ 

The second distribution is the generalized Burr-Pareto-Logistic (with
$\alpha=1,\beta=0)$ in \cite[Chap.~9]{Johnson1987}; see his Figure 9.8
for a sketch of the density and p. 167 for an algorithm to generate
synthetic data. The distribution of $(X_{1},X_{2})$ has dependent
$N(0,1)$ marginals with contours of triangular shape. Here the distribution
of $R$ is almost a $\sqrt{\chi_{2}^{2}}$ , $\boldsymbol{U}$ is
clearly non-uniform with a trimodal distribution, while $R$ and $\boldsymbol{U}$
are slightly correlated (with $\rho_{S}\approx0.04)$. Hence we should
find : $\textrm{power}(\mathcal{R}_{5}^{(s)})\approx5\%<\textrm{power}(\mathcal{I}_{5}^{(s)})\leq\textrm{power}(\mathcal{U}_{5}^{(s)}).$
The sample size is $n=250$.

The third distribution is the contaminated binormal
: $0.8\times MVN_{2}(0,\mathbf{I}_{2})+0.2\times MVN_{2}((1,1),\textrm{ Diag}\{1,2\})$,
see \cite[Chap.~4]{Johnson1987}. $R$ is again very
close to a $\sqrt{\chi_{2}^{2}}$ , $\boldsymbol{U}$ has a distribution
that slightly differs from uniformity while $R$ and $\boldsymbol{U}$
are slightly correlated ($\rho_{S}=-0.03$). Again
$n=400$ and we expect to find : $\textrm{power}(\mathcal{R}_{5}^{(s)})\approx5\%\leq\textrm{power}(\mathcal{U}_{5}^{(s)}),\textrm{power}(\mathcal{I}_{5}^{(s)})$,
with little insight about the comparative power of $\mathcal{U}_{5}^{(s)},\mathcal{I}_{5}^{(s)}.$

The fourth density is a Laplace-type (because of its marginals) distribution
generated by the following scheme : take $W_{0},W_{1},W_{2}\sim Exp(1)$
and form $(X_{1},X_{2})=(W_{1}-W_{0},W_{2}-W_{0})$. Here, $\boldsymbol{U}$
is non-uniform, $R$ departs from the $\sqrt{\chi_{2}^{2}}$ but $R$
and $\theta$ are approximately independent $(\rho_{S}=-0.004)$,
so one can expect: $\textrm{power}(\mathcal{I}_{5}^{(s)})\approx5\%\leq\textrm{power}(\mathcal{U}_{5}^{(s)}),\textrm{power}(\mathcal{R}_{5}^{(s)})$.
We have taken $n=75$ to mitigate the high power of $\mathcal{Q}_{5}$
with that of its components.

The last distribution is a $2$-th power
exponential distribution generated by taking $R\sim\Gamma(2,2)$,
$\theta\sim U(0,2\pi)$ and forming $(X_{1},X_{2})=R\times(\cos\theta,\sin\theta)$.
This yield an EC distribution where $R$ is markedly different from
a $\sqrt{\chi_{2}^{2}}$ . Thus here we should find : $\textrm{5\% \textrm{\ensuremath{\approx}\ power}(\ensuremath{\mathcal{U}_{5}^{(s)}}) \textrm{\ensuremath{\approx}\ power}(\ensuremath{\mathcal{I}_{5}^{(s)}}) \ensuremath{\leq}\ power}(\mathcal{R}_{5}^{(s)}).$
We have taken $n=100$.

Table \ref{tab:Approximate-power} shows the power (in \%) of the
various tests. The shaded cells are located
where the power of the components of the $\mathcal{Q}=\mathcal{UIR}$
decomposition are expected to be greater than nominal ($5\%$). 
A first general conclusion is that no global test dominates. An interesting comparison is between
J-B and $\mathcal{Q}_{5}$ where, as stated in Section \ref{subsec:Sous section sur la MVN},
the first is a fragment of the second. For some alternatives (e.g.
Burr-Pareto-Logistic), the extra components have little power and
lead to power dilution of $\mathcal{Q}_{5}$ with respect to J-B.
For other alternatives (e.g. Khintchine), power of J-B test is much
lower because the extra components in $\mathcal{Q}_{5}$ detect departures
that hardly translate into skewness or kurtosis. A data-driven approach
to select the value of $K$ could be one way to further increase the
power of $\mathcal{Q}_{K}$, but see Section \ref{sec:Conclusions Sec 7}. One atypical case
is the $2$-power exponential.
This will be discussed below. 

\begin{table}
\begin{centering}
\begin{tabular}{|c|c|c||c|c|c||c|c|c|c|}
\hline 
Alt. / Test & $n$ & BHEP & J-B & $\beta_{1}$ $=\mathcal{Q}_{3}$ & $\beta_{2}$ & $\mathcal{Q}_{5}$ & $\mathcal{U}_{5}^{(s)}$ & $\mathcal{I}_{5}^{(s)}$ & $\mathcal{R}_{5}^{(s)}$\tabularnewline
\hline 
\hline 
K  & 400 & 47.9 & 6.9 & 6.8 & 6.23 & 63.5 & \cellcolor[gray]{.85}97.6 & 4.6 & 5.4\tabularnewline
\hline 
BPL & 250 & 52.2 & 95.6 & 96.7 & 21.5 & 68.9 & \cellcolor[gray]{.85}94.9 & \cellcolor[gray]{.85}21.8 & 1.7\tabularnewline
\hline 
CMVN & 400 & 50.6 & 76.3 & 77.3 & 40.8 & 54.5 & \cellcolor[gray]{.85}19.5 & \cellcolor[gray]{.85}41.9 & 4.6\tabularnewline
\hline 
Lt & 75 & 96.6 & 99.0 & 98.1 & 95.0 & 94.9 & \cellcolor[gray]{.85}39.5 & 7.6 & \cellcolor[gray]{.85}21.9\tabularnewline
\hline 
2PE & 100 & 45.0 & 2.1 & 0.0 & 0.0 & 0.0 & 6.5 & 6.4 & \cellcolor[gray]{.85}92.2\tabularnewline
\hline 
\end{tabular}
\par\end{centering}
\caption{\label{tab:Approximate-power}Power in \% (based on 5,000 replications
at level $\alpha=5\%$) of the test of bivariate normality based on
$\mathcal{Q}_{5}$ and its $\mathcal{UIR}$ components. The reference
distribution has been approximated by 20,000 Monte Carlo samples of
size $n$ from the $MVN(\mathbf{0},\mathbf{I}_{2})$. Alternatives are K=Khintchine, BPL=Burr-Pareto-Logistic, CMVN=Contaminated MVN, Lt=Laplace -type and 2PE=$\alpha=2-$power exponential. Also shown is
the BHEP (Baringhauss-Henze-Epps-Pulley) test and the J-B (multivariate
Jarque-Bera) test along with its components $\beta_{1},\beta_{2}$
(both using Monte Carlo quantiles). The shaded cells correspond to
those where we expect power $>$ 5\%.}
\end{table}

Regarding the power (i.e. sensitivity) of the components, the Dx information
derived from the $\mathcal{Q}=\mathcal{UIR}$
decomposition is consonant with the expectations related to each
alternative. Furthermore, the magnitudes of the power in the shaded
cells somewhat reflect the severeness of the departures (when grossly
quantifiable). For
the Khintchine, both $\mathcal{R}_{5}^{(s)},\mathcal{I}_{5}^{(s)}$
have power approximately equal to level and the severe non-uniformity
of $\boldsymbol{U}$ is detected with good power. For the Burr-Pareto-Logistic,
the expectation : $\textrm{power}(\mathcal{R}_{5}^{(s)})\approx5\%<\textrm{power}(\mathcal{I}_{5}^{(s)})\leq\textrm{power}(\mathcal{U}_{5}^{(s)})$
is realized while the small power of $\mathcal{I}_{5}^{(s)}$ (21.8\%)
reflects the slight correlation between $R$ and $\boldsymbol{U}$.
Again, the severe non-uniformity of $\boldsymbol{U}$ is almost always
detected. For the contaminated MVN, $\mathcal{R}_{5}^{(s)}$ has,
as expected, trivial power, $\mathcal{I}_{5}^{(s)}$ picks up rather
well the dependency between $R$ and $\boldsymbol{U}$ while the slight
non-uniformity of $\boldsymbol{U}$ is detected with moderate power.
 For the Laplace-type case, the departures on the marginal distributions
of $R$ and $\boldsymbol{U}$ are correctly detected. For the $2-$power
exponential, $\textrm{power}(\mathcal{R}_{5}^{(s)})\approx91.6$,
showing that the departures along the distribution of $R$ are correctly
captured. The two other components have power close to nominal, as
they should. 

To summarize,  one can have some confidence that our Dx procedure
 can be a useful tool in the iterative process of modeling a data set.

In some cases, at least one component yield better power than the
global test statistic from which it is extracted. Thus the natural
two-stage strategy where one first test the global null hypothesis
using $\mathcal{Q}_{K}$ and, if significant, proceed at the Dx level,
would lose valuable information. This can be explained by a number
of factors. First, when only one component ought to be significant,
as with the Khintchine, the degrees of freedom associated with this
component (6 in this case) is much smaller than that of the global
test (here 15), thus diluting power. This effect is further confounded
by the scaling of the components, which also affects power, and suggests
looking at a scaled global statistic. Here $\textrm{power}(\mathcal{Q}_{5}^{(s)}=\mathcal{U}_{5}^{(s)}+\mathcal{I}_{5}^{(s)}+\mathcal{R}_{5}^{(s)})$
= 83.5, greater than the other global tests, which suggest replacing
$\mathcal{Q}_{5}$ by its scaled version. However the benefit of this
replacement is not universal : for the contaminated normal,
$\textrm{power}(\mathcal{Q}_{5})$ = 54.5 while $\textrm{power}(\mathcal{Q}_{5}^{(s)})=$
45.6. Thus it is not clear which of the above factors (degrees of
freedom vs scaling) has more impact on power. An extreme case of this
is the $2-$th power exponential where the power of $\mathcal{Q}_{5}$
is almost zero and the J-B test is not much better. Here scaling has
a large effect on power as $\textrm{power}(\mathcal{Q}_{5}^{(s)})$
= 47.5. This is an instance where the values of the $\lambda_{k,j,\ell}$
in Remark \ref{rem:Remark on power} renders the global test biased.
Scaling partly corrects the problem, as explained in \cite{Ducharme2016}. A procedure that would select in a data-driven
fashion between $\mathcal{Q}_{K}$ and $\mathcal{Q}_{K}^{(s)}$, in
addition to a proper value of $K$ globally or at the component level,
would perhaps reduce such variations. See Section \ref{sec:Conclusions Sec 7}.

The J-B test also offers some Dx information via its $b_{1,m},b_{2,m}$
components. But they are not scaled and, in particular, $\mathcal{R}_{5}^{(s)}$ is the scaled version of $b_{2,m}$. Comparing their powers shows the pitfalls in attempting to interpret unscaled Dx statistics. However, being fragments of our components, they can be rescaled
and exploited to further refine the Dx information, a problem we now look into from another angle.

In the above, we have taken $K=5$ and shown that our procedure can provide
useful Dx information supplementing the global test. Now we go further
and vary $K$ to see if more precise Dx information can be extracted
from a set of components.
Table \ref{tab:Values-of-some Table 2} presents further results from
our experiment, namely the values of $(\mathcal{U}_{k}^{(s)},k=3,...,7)$
and $(\mathcal{I}_{k}^{(s)},k=3,...,7)$ in the case of the Khintchine
distribution ($n=400)$. Notice the sharp increase in power
between $\mathcal{U}_{3}^{(s)}$ and $\mathcal{U}_{4}^{(s)}$. This
indicates that the hypersphericals in the added components of $\mathcal{U}_{4}^{(s)}$
pick up some important departure from uniformity. To see what as been
detected, we refer to the definition of projection-based Dx information
in \cite{Ducharme2016}. Recall that the elements in $\mathcal{U}_{3}^{(s)}$
have the form $\sin(k\theta)$, $\cos(k\theta)$, $k=1,2,3$ and
thus have at most 3 maximums in $[0,2\pi)$. Those
added in $\mathcal{U}_{4}^{(s)}$ have the form $\sin(4\theta)$,
$\cos(4\theta)$ and the increased power indicates that they capture
higher oscillations, suggesting that the density of $\boldsymbol{U}$
(or $\theta)$ could have at least four maximums. This is consonant with 
the fact that $\theta$ 
has a distribution that oscillates as $\sin(4\theta)$. Also, recall that the scatter plot of $(R,\theta)$ reveals
that these quantities have a complicated relationship. A similar increase of power from $\mathcal{I}_{5}^{(s)}$
to $\mathcal{I}_{6}^{(s)}$ and afterwards indicates again that some
correlation has been detected at the order of functions of the form
$\textrm{cos}(6\theta),\textrm{sin}(6\theta)$. Indeed a regression
of $R$ on $\theta$ with basis functions $\sin(k\theta),\cos(k\theta),k=1,...,7$
retains the functions $\cos(4\theta)$ and $\sin(6\theta)$ as significant.
Hence the results in Table \ref{tab:Values-of-some Table 2}
 indicate that beyond the basic analysis of the $\mathcal{Q=UIR}$
components, a finer analysis of the subcomponents could be exploited
to pinpoint with better precision the nature of the departures. 

\begin{table}
\begin{centering}
\begin{tabular}{|c|c|c|c|c|c|}
\hline 
Component & $K=3$ & $K=4$ & $K=5$ & $K=6$ & $K=7$\tabularnewline
\hline 
\hline 
$\mathcal{U}_{K}^{(s)}$ & 5.2 & 98.7 & 97.6 & 96.3 & 94.5\tabularnewline
\hline 
$\mathcal{I}_{K}^{(s)}$ & 4.8 & 4.0 & 4.6 & 56.1 & 47.1\tabularnewline
\hline 
\end{tabular}
\par\end{centering}
\caption{\label{tab:Values-of-some Table 2}Power in \% (based on 5000 replications)
of some components in the $\mathcal{UIR}$ decomposition for the Khintchine
alternative. The level is $\alpha=5\%.$ The reference distribution
has been approximated by 20 000 Monte Carlo samples of size $n=400$
from the $MVN(\mathbf{0},\mathbf{I}_{2})$.}

\end{table}

\section{Conclusions}\label{sec:Conclusions Sec 7}

This paper develops a smooth test of goodness-of-fit
for EC distributions that, through the invariant $\mathcal{\mathcal{Q}=\mathcal{UIR}}$
decomposition, allows to extract illuminating Dx information. Some applications
are worked out, in particular the important case of the MVN, where
the usefulness of the $\mathcal{Q}=\mathcal{UIR}$ decomposition is
demonstrated. For most EC distributions, there exists no competitor
to our approach. For the MVN, a small experiment suggests that our
smooth test can be competitive powerwise.

In the present work, no attempt is made to optimize
the selection of the hyperparameter $K$. his
topic deserves a separate analysis because the present
context opens up many new possibilities beyond the obvious extension
to select among $\mathcal{Q}_{1},\mathcal{Q}_{2},...,\mathcal{Q}_{K}$
in a data-driven fashion via (\ref{eq: Choix de K_hat}).  One of
these is to increase sensitivity of the Dx by selecting among the
components, say $\mathcal{U}_{1},\mathcal{U}_{2},...,\mathcal{U}_{K}$.
In addition, it was shown in Table \ref{tab:Values-of-some Table 2}
that this could also prove useful in better understanding the nature
of the detected departure. Another is to select among \{$\mathcal{U}_{K},\mathcal{U}_{K}+\mathcal{I}_{K},$
$\mathcal{U}_{K}+\mathcal{I}_{K}+\mathcal{R}_{K}=\mathcal{Q}_{K}\}$
or some other (data-driven) permutation of the components. Yet another
is to select between the raw components and their rescaled versions.

In multivariate contexts, the computation of the mle 
may be difficult or the information  $\boldsymbol{\mathcal{J}}_{\eta}$
may not exist. This is the case for example in the Laplace
discusses by \cite{Fragiadakis2011},
thus forcing the use of moment estimators. Deriving the $\mathcal{\mathcal{Q}=\mathcal{UIR}}$
decomposition in such cases remains to be investigated.

Finally, EC distributions often involve (e.g. the
$\alpha$-th power exponential), in addition
to $\eta$, a shape parameter that offers more flexibility in adjusting
the data. The shape parameter must in general be estimated and this
does not, in principle, affect the use of a test statistic similar
to (\ref{eq:2.6}). However, an open question is how does this estimation
impact the $\mathcal{Q}=\mathcal{UIR}$ decomposition. 

\bibliographystyle{imsart-nameyear}
\bibliography{biblio.bib}

\begin{thebibliography}{10}
  
\bibitem{Axler2001}
  \textsc{Axler, S.}, \textsc{Bourdon, P.} and \textsc{Ramey, W.} (2001). \textit{Harmonic function theory}, $2^{nd}$ edition. Springer-Verlag, New York.

%
\bibitem{Baringhaus1988}
\textsc{Baringhaus, L.} and \textsc{Henze, N.} (1988) : A consistent test for multivariate normality based on
the empirical characteristic function. \textit{Metrika}, \textbf{35}, 339--348.

%
\bibitem{Best1988}
\textsc{Best D.J.} and \textsc{Rayner, J.C.W.} (1988). A test for bivariate normality. \textit{Stat. Probab. Lett.}, \textbf{6}, 407--12.

\bibitem{Bilodeau1999}
\textsc{Bilodeau, M.} and \textsc{Brenner, D.} (1999). \textit{Theory of multivariate statistics}. Springer, New York.

%
\bibitem{Bogdan1999}
  \textsc{Bogdan, M.} (1999). Data driven smooth tests for bivariate normality. \textit{J. Multivar. Anal.}, \textbf{68}, 26--53.

%
\bibitem{Boulerice1997}
  \textsc{Boulerice, B.} and \textsc{Ducharme, G.R.} (1997). Smooth tests of goodness-of-fit for directional and axial data. \textit{J. Multivar. Anal.}, \textbf{60}, 154--175.

%
\bibitem{Cambanis1981}
  \textsc{Cambanis, S.}, \textsc{Huang, S.}, \textsc{Simons, G.} (1981) : On the Theory of Elliptically Contoured Distributions. \textit{J. Multivar. Anal.}, \textbf{11}, 368--338.

\bibitem{Chmielewski1981}
  \textsc{Chmielewski, M.A.} (1981). Elliptically Symmetric Distributions: A Review and Bibliography. \textit{Int. Stat. Rev.}, \textbf{49}, 67--74.

\bibitem{Csorgo1989}
  \textsc{Cs\"org\"o, S.}, (1989) : Consistency of some tests for multivariate normality. \textit{Metrika}, \textbf{36}, 107--116.

\bibitem{Ducharme2001}
  \textsc{Ducharme, G.R.} (2001) : Goodness-of-fit tests for the inverse Gaussian and related distributions. \textit{Test}, \textbf{10}, 271--290.

\bibitem{Ducharme2016}
  \textsc{Ducharme, G.R.}, and \textsc{Al Akhras, W.} (2016) : Tree based Diagnostic Procedures Following a Smooth Test of goodness-of-Fit. \textit{Metrika}, \textbf{79}, 971--989.

\bibitem{Ebner2012}
  \textsc{Ebner, B.} (2012) : Asymptotic theory for the test for multivariate normality by Cox and Small. \textit{J. Multivar. Anal.} \textbf{111}, 368--379.

%

\bibitem{Ferguson1996}
  \textsc{Ferguson, T.S.} (1996): \textit{A course in large sample theory}. Chapman and Hall, London.

%
\bibitem{Fragiadakis2011}
  \textsc{Fragiadakis, K.} and \textsc{Meintanis, S. G.} (2011) : Goodness-of-fit tests for multivariate Laplace distributions.
\textit{Math. Comput. Model.}, \textbf{53},7 69--779.

\bibitem{Gelman1996}
  \textsc{Gelman, A.}, \textsc{Meng, X.L.} and \textsc{Stern, H.} (1996) : Posterior predictive assessment of model fitness
via realized discrepancies. \textit{Stat. Sin.}, \textbf{6}, 733--807.

\bibitem{Hassan2005}
  \textsc{Hassan, M.} (2005) : \textit{Estimation dans les mod\`eles elliptiques avec une application \`a la biostatistique}.
Th\`ese Universit\'e Montpellier 2 (in French).

%
\bibitem{Helgason1984}
  \textsc{Helgason, S.} (1984). \textit{Groups and   geometric analysis: Integral geometry, invariant differential operators
  and spherical functions}. Academic Press, New York.

\bibitem{Henze1997}
  \textsc{Henze, N.} (1997). Do components of smooth tests of fit have diagnostic properties ? \textit{Metrika}, \textbf{45}, 121--130.

%
\bibitem{Henze2002}
  \textsc{Henze, N.} (2002) : Invariant tests for multivariate normality : a critical review. \textit{Stat. Papers}, \textbf{43}, 467-506.

\bibitem{Inglot1994}
  \textsc{Inglot T.}, \textsc{Kallenberg, W.C.M.} and \textsc{Ledwina, T.} (1994). Power approximations to and power comparison of smooth goodness-of-fit tests. \textit{Scand. Stat. Theory. Appl.}, \textbf{21}, 131--145.

\bibitem{Javitz1975}
   \textsc{Javitz, H.} (1975): \textit{Generalized smooth tests of goodness-of-fit, independence and equality of distributions}. Ph.D. Thesis, University of California, Berkeley.

\bibitem{Johnson1987}
  \textsc{Johnson, M.E.} (1987) : \textit{Multivariate Statistical   Simulation}. Wiley, New York.

%
\bibitem{Kallenberg1997}
  \textsc{Kallenberg, W.C.M.} and \textsc{Ledwina, T.} (1997). Data driven smooth tests when the hypothesis is composite.
\textit{J. Am. Stat. Assoc.}, \textbf{92}, 1094--1104.

\bibitem{Kariya1995}
  \textsc{Kariya, T.} and \textsc{George, E.I.} (1995). LBI tests for multivariate normality in curved families and Mardia's
test. \textit{Sankhyā: The Indian Journal of Statistics, Ser A}, \textbf{57}, 440--451.

%
\bibitem{Koizumi2009}
  \textsc{Koizumi, K.}, \textsc{Okamoto, N.}, and \textsc{Seo, T.} (2009) : On Jarque-Bera tests for assessing multivariate
normality. \textit{Journal of Statistics: Advances in Theory and Applications}, \textbf{1}, 207--220.

%
\bibitem{Kent1991}
  \textsc{Kent, J.T.} and \textsc{Tyler, D.E.} (1991). Redescending M-estimates of multivariate location and scatter. \textit{Ann.
Statist} \textbf{19}, 2102--2119.

%
\bibitem{Klar2000}
  \textsc{Klar, B.} (2000). Diagnostic smooth tests of fit. \textit{Metrika}, \textbf{52}, 237--252.

%
\bibitem{Koziol1986}
  \textsc{Koziol, J.A.} (1986). Assessing multivariate normality: A compendium. \textit{Comm. Statist. Theory Methods}, \textbf{15}, 2763--2783.

\bibitem{Koziol1987}
  \textsc{Koziol, J.A.} (1987). An alternative formulation of Neyman's smooth goodness-of-fit tests under composite
alternatives. \textit{Metrika}, \textbf{34}, 17--24.

\bibitem{Lancaster1969}
  \textsc{Lancaster, H.O.} (1969). \textit{The chi-square distribution}. Wiley, New York.

%
\bibitem{Ledwina1994}
  \textsc{Ledwina, T.} (1994). Data-Driven Version of Neyman's Smooth Test of Fit, \textit{J. Am. Stat. Assoc.}, \textbf{89}, 1000--1005.

\bibitem{Lemonte2011}
  \textsc{Lemonte A.J.}, \textsc{Patriota, A.} (2011) : Multivariate elliptical models with general parametrization.
\textit{Stat. Methodol.}, \textbf{8}, 389--400.

%
\bibitem{Mardia1970}
  \textsc{Mardia, K.V.} (1970). Measures of multivariate skewness and kurtosis with applications. \textit{Biometrika},
\textbf{57}, 519--30.

\bibitem{Mardia1979}
  \textsc{Mardia, K.V.}, \textsc{Kent, J.T.} and \textsc{Bibby, J.M.} (1979). \textit{Multivariate Analysis}. Academic Press, London.

\bibitem{Mardia1991}
  \textsc{Mardia, K.V.} and \textsc{Kent, J.T.} (1991). Rao score tests for goodness-of-fit and independence. \textit{Biometrika},
\textbf{78}, 355--363.

%
\bibitem{Mcassey2013}
  \textsc{Mcassey, M.P.} (2013) . An empirical goodness-of-fit test for multivariate distributions. \textit{J. Appl. Stat.}, \textbf{40}, 1120--1131.

\bibitem{Mecklin2005}
  \textsc{Mecklin, C.J.} and \textsc{Mundfrom, D.J.} (2005): A Monte Carlo comparison of the Type I and Type II error rates of tests of multivariate normality. \textit{J. Stat. Comput. Simul.}, \textbf{75},  93--107.

%
\bibitem{Muirhead1982}
  \textsc{Muirhead, R.B.} (1982): \textit{Aspects of multivariate statistical theory}. Wiley, New York.

%
\bibitem{Naik2006}
  \textsc{Naik, D.N.}, \textsc{Plungpongpun, K.} (2006) : \textit{A Kotz-type distribution for multivariate statistical inference}. In Advances in distribution theory, order statistics, and inference, N. Balakrishan, J.M. Sarabia, E. Castillo eds. Statistics for Industry and Technology Part II, Birkhäuser, Boston, 111--124

\bibitem{Neyman1937}
  \textsc{Neyman, J.} (1937). Smooth tests for goodness-of-fit. \textit{Skand. Aktuariedidskr}, \textbf{20}, 150--199.

\bibitem{Ozlem2016}
  \textsc{Ozlem A}. and \textsc{Demet Y.}, (2016) : Comparison of some multivariatre normality tests: A simulation study. \textit{Int. J. Adv. Applied Sciences}, \textbf{3}, 73--85.

%
\bibitem{Rayner1988}
  \textsc{Rayner, J.C.W.} and \textsc{Best D.J.} (1988). Smooth tests of goodness-of-fit for regular distributions. \textit{Commun. Stat. Theory. Methods.}, \textbf{17}, 3235--67.

\bibitem{Rayner2009}
  \textsc{Rayner, J.C.W.,} \textsc{Thas, O.} and \textsc{Best, D.J.} (2009). \textit{Smooth tests of goodness-of-fit using R, Second edition}. John Wiley \& Sons, Singapore.

%
\bibitem{Rayner1990}
  \textsc{Rayner, J.C.W.} and \textsc{Best, D.J.} (1990). Smooth Tests of Goodness of Fit: An Overview, \textit{Int. Stat. Rev.}, \textbf{58}, 9--17.

\bibitem{Rojas2014}
  \textsc{Rojas, M.A.}, \textsc{Bolfarine, H.}, and \textsc{Gomez, H.W.} (2014) : An extension of the slash-elliptical distribution. \textit{Stat. Oper. Res.}, \textbf{38}, 215--230.

%
\bibitem{Romeu1993}
  \textsc{Romeu J.L.} and \textbf{Ozturk, A.} (1993). A comparative study of goodness-of-fit tests for multivariate normality. \textit{J. Multivar. Anal.}, \textbf{46}, 309--34.

%
\bibitem{Shorack1986}
  \textsc{Shorack, G.R.} and \textsc{Wellner, J.A.}, (1986) : \textit{Empirical Processes with Applications to Statistics}. Wiley, New York.

\bibitem{Thas2010}
  \textsc{Thas, O.} (2010) : \textit{Comparing Distributions}, Springer, New York.

\bibitem{Thomas1979}
  \textsc{Thomas,
D.R.} and \textsc{Pierce, D.A.} (1979). Neyman's smooth goodness-of-fit test when the hypothesis is composite. \textit{J. Am. Stat. Assoc.}, \textbf{74}, 441--5.

\bibitem{Wang2006}
  \textsc{Wang, J.}, and \textsc{Genton, M.G.} (2006). The multivariate skew-slash distribution. \textit{J. Stat. Plan. Inference}, \textbf{136}, 209--220.
\end{thebibliography}



%

%

\newpage

\appendix

\section{Proofs and auxiliary results}\label{sec: 8 Proofs-and-auxiliary}

\subsection{Proof of Theorem \ref{thm:3.1}}\label{subsec:Proof-of-Theorem 3.1}

Let $k\geq0$ and $\boldsymbol{x}\mathbb{\in R}^{m}$. For any fixed
$\gamma$ = $(\mathbf{A},\boldsymbol{b})$ $\in$ $AL(m)$, define
the functions

\[
\tilde{\Psi}_{i,\ell}(\boldsymbol{x};\gamma)=\Psi_{i,\ell}(\gamma(\boldsymbol{x})/\Vert\gamma(\boldsymbol{x})\Vert)\Vert\gamma(\boldsymbol{x})\Vert^{i},
\]
where the $\Psi_{i,\ell}(\cdot)$ are the spherical harmonics evoked
in Section \ref{subsec:Sec3.1}. To avoid trivialities, set $\tilde{\Psi}_{i,\ell}(-\boldsymbol{b};\gamma)=0$
for $i>0$ and $\tilde{\Psi}_{0,1}(-\boldsymbol{b};\gamma)=1$. It
follows that $\tilde{\Psi}_{i,\ell}(x;\gamma)$ is an homogeneous
polynomial of degree \emph{$i$} in the components of $\gamma(\boldsymbol{x})=\mathbf{A}(\boldsymbol{x}+\boldsymbol{b})$
and thus a polynomial of degree \emph{$i$} in $\boldsymbol{x}\in\mathbb{R}^{m}$
. We need the following lemma which states that for any given $\gamma$,
any polynomial in $\mathbb{R}^{m}$ can be expressed as a linear combination
of the $\pi_{k,j,\ell}(r,\boldsymbol{u})$ of (\ref{eq:Forme explicite des polyn=0000F4mes Pi_k,j,l})
and explains how the coefficients of this linear combination are affected
by the choice of $\gamma$.
\begin{lem*}
Let $p(\cdot)\in\mathcal{P}_{k}$. Then, for any $\gamma=(\mathbf{A},\boldsymbol{b})\in AL(m)$,
we have 

\begin{equation}
p(\boldsymbol{x})=\sum_{i=0}^{k}\sum_{j=0}^{[(k-i)/2]}\sum_{\ell=1}^{e_{m}(i)}a_{j,i,\ell}(\gamma)s_{j,i}(\Vert\gamma^{-1}(\boldsymbol{x})\Vert)\tilde{\Psi}_{i,\ell}(\boldsymbol{x};\gamma^{-1}),\label{eq:A.1}
\end{equation}
for all $\boldsymbol{x}$ $\in$ $\mathbb{R}^{m}$, where $\gamma^{-1}=(\mathbf{A}^{-1},-\mathbf{A}\boldsymbol{b})$
and 

\begin{equation}
a_{j,i,\ell}(\gamma)=c_{m}\det(\mathbf{A})\intop_{\mathbb{R}^{m}}p(\boldsymbol{x})s_{j,i}(\Vert\gamma^{-1}(\boldsymbol{x})\Vert)\tilde{\Psi}_{i,\ell}(\boldsymbol{x};\gamma^{-1})\phi_{m}(\Vert\gamma^{-1}(\boldsymbol{x})\Vert^{2})d\boldsymbol{x}.\label{eq:A2}
\end{equation}
\end{lem*}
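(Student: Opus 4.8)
The plan is to prove the identity first for $\gamma$ equal to the identity map $\mathrm{id}=(\mathbf{I}_{m},\boldsymbol{0})$, where (\ref{eq:A.1}) is just a Fourier expansion in an orthonormal basis, and then obtain the general case by a change of variables. Throughout write $q_{j,i,\ell}(\boldsymbol{x})=s_{j,i}(\Vert\boldsymbol{x}\Vert)\,\tilde{\Psi}_{i,\ell}(\boldsymbol{x};\mathrm{id})=s_{j,i}(\Vert\boldsymbol{x}\Vert)\Vert\boldsymbol{x}\Vert^{i}\Psi_{i,\ell}(\boldsymbol{x}/\Vert\boldsymbol{x}\Vert)$; since $\tilde{\Psi}_{i,\ell}(\cdot\,;\mathrm{id})$ is a homogeneous polynomial of degree $i$ (as noted just before the lemma) and $s_{j,i}(r)$ is a polynomial of degree $j$ in $r^{2}$, each $q_{j,i,\ell}$ lies in $\mathcal{P}_{2j+i}$ (the defining conventions at $-\boldsymbol{b}$ merely fill in removable singularities, so all functions below are genuine polynomials). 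Let $\langle u,v\rangle_{*}=c_{m}\int_{\mathbb{R}^{m}}u(\boldsymbol{x})v(\boldsymbol{x})\phi_{m}(\Vert\boldsymbol{x}\Vert^{2})\,d\boldsymbol{x}$, which is the scalar product $\langle\cdot,\cdot\rangle_{(f,\eta)}$ evaluated at the spherical parameter $\eta=(\boldsymbol{0},\mathbf{I}_{m})$.

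\emph{Step 1: the family $\{q_{j,i,\ell}:\ i\geq0,\ j\geq0,\ 2j+i\leq k,\ 1\leq\ell\leq e_{m}(i)\}$ is an orthonormal basis of $\mathcal{P}_{k}$ for $\langle\cdot,\cdot\rangle_{*}$.} By the classical solid-harmonic (Fischer) decomposition of polynomials (see \cite{Axler2001}), the functions $\Vert\boldsymbol{x}\Vert^{2j}Y_{i,\ell}(\boldsymbol{x})$ with $2j+i\leq k$, $1\leq\ell\leq e_{m}(i)$ span $\mathcal{P}_{k}$, where $Y_{i,\ell}(\boldsymbol{x})=\Vert\boldsymbol{x}\Vert^{i}\Psi_{i,\ell}(\boldsymbol{x}/\Vert\boldsymbol{x}\Vert)$ is the solid harmonic extending $\Psi_{i,\ell}$. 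By (\ref{eq:3.5}) the leading coefficient of $s_{j,i}$ in $r^{2}$ does not vanish, so for each fixed $(i,\ell)$ the family $\{q_{j,i,\ell}\}_{j}$ is obtained from $\{\Vert\boldsymbol{x}\Vert^{2j}Y_{i,\ell}(\boldsymbol{x})\}_{j}$ by an invertible upper-triangular transformation; hence the $q_{j,i,\ell}$ still span $\mathcal{P}_{k}$. For orthonormality, pass to polar coordinates $\boldsymbol{x}=r\boldsymbol{u}$: $\langle q_{j,i,\ell},q_{j',i',\ell'}\rangle_{*}$ factors into the angular integral $\int_{\Omega_{m}}\Psi_{i,\ell}\Psi_{i',\ell'}\,d\omega_{m}$, which vanishes unless $i=i'$ and $\ell=\ell'$ (orthonormality of the $\Psi$'s for $\langle\cdot,\cdot\rangle_{d\omega_{m}}$) and equals $\omega_{m}(\Omega_{m})$ otherwise, times $c_{m}\int_{0}^{\infty}s_{j,i}(r)s_{j',i}(r)\phi_{m}(r^{2})r^{m-1+2i}\,dr$; by (\ref{eq:3.4}) the product is $\delta_{ii'}\delta_{\ell\ell'}\delta_{jj'}$. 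Being orthonormal, the $q_{j,i,\ell}$ are linearly independent, so together with spanning they form a basis of $\mathcal{P}_{k}$.

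\emph{Step 2: the identity case and reduction.} For $\gamma=\mathrm{id}$ one has $\gamma^{-1}=\mathrm{id}$, $\det(\mathbf{A})=1$ and $s_{j,i}(\Vert\gamma^{-1}(\boldsymbol{x})\Vert)\tilde{\Psi}_{i,\ell}(\boldsymbol{x};\gamma^{-1})=q_{j,i,\ell}(\boldsymbol{x})$, so (\ref{eq:A.1}) is exactly the expansion of $p\in\mathcal{P}_{k}$ in the basis of Step 1, and its coefficients are $a_{j,i,\ell}(\mathrm{id})=\langle p,q_{j,i,\ell}\rangle_{*}$, which is (\ref{eq:A2}) at $\gamma=\mathrm{id}$. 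For a general $\gamma=(\mathbf{A},\boldsymbol{b})\in AL(m)$, precomposition with the affine automorphism $\gamma^{-1}$ is a linear automorphism of $\mathcal{P}_{k}$, and by the definition of $\tilde{\Psi}$ we have $q_{j,i,\ell}(\gamma^{-1}(\boldsymbol{x}))=s_{j,i}(\Vert\gamma^{-1}(\boldsymbol{x})\Vert)\tilde{\Psi}_{i,\ell}(\boldsymbol{x};\gamma^{-1})$; hence $\{q_{j,i,\ell}\circ\gamma^{-1}\}$ is again a basis of $\mathcal{P}_{k}$, so (\ref{eq:A.1}) holds with uniquely determined coefficients $a_{j,i,\ell}(\gamma)$. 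Applying the identity case to $q:=p\circ\gamma\in\mathcal{P}_{k}$ gives $q=\sum a_{j,i,\ell}(q)\,q_{j,i,\ell}$ with $a_{j,i,\ell}(q)=\langle q,q_{j,i,\ell}\rangle_{*}$; setting $\boldsymbol{x}=\gamma(\boldsymbol{z})$ yields $p(\boldsymbol{x})=\sum a_{j,i,\ell}(q)\,q_{j,i,\ell}(\gamma^{-1}(\boldsymbol{x}))$, so $a_{j,i,\ell}(\gamma)=a_{j,i,\ell}(q)$ by uniqueness. Finally, rewriting $a_{j,i,\ell}(q)=c_{m}\int p(\gamma(\boldsymbol{z}))\,q_{j,i,\ell}(\boldsymbol{z})\,\phi_{m}(\Vert\boldsymbol{z}\Vert^{2})\,d\boldsymbol{z}$ and substituting $\boldsymbol{z}=\gamma^{-1}(\boldsymbol{x})$ turns this into the integral in (\ref{eq:A2}), the Jacobian $|\det\mathbf{A}^{-1}|$ of $\gamma^{-1}$ producing the determinant factor.

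\emph{Main obstacle.} The only genuinely non-routine ingredient is Step 1: recognizing, via the solid-harmonic (Fischer) decomposition, the correct polynomial basis of $\mathcal{P}_{k}$ assembled from the radial polynomials (\ref{eq:3.5}) and the hyperspherical harmonics $\Psi_{i,\ell}$, and verifying that it is orthonormal for $\langle\cdot,\cdot\rangle_{(f,\eta)}$ at the spherical parameter (which is where (\ref{eq:3.4}) is used). Once this is in place, the identity case is immediate and the general case is a single change of variables.
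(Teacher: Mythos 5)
Your proof is correct and follows essentially the same route as the paper's: both rest on expanding the restriction of a degree-$k$ polynomial to spheres in the hyperspherical harmonics $\Psi_{i,\ell}$, identifying the radial coefficients as $r^{i}$ times polynomials of degree at most $[(k-i)/2]$ in $r^{2}$ (your Step~1 justifies this explicitly via the solid-harmonic decomposition, which the paper only asserts), re-expanding those radial parts in the $s_{j,i}$ using (\ref{eq:3.4}), and passing to general $\gamma$ by a change of variables. One small point: your substitution $\boldsymbol{z}=\gamma^{-1}(\boldsymbol{x})$ produces the Jacobian factor $|\det(\mathbf{A}^{-1})|=|\det\mathbf{A}|^{-1}$, not the $\det(\mathbf{A})$ appearing in (\ref{eq:A2}); this points to a typo in the normalizing constant of the lemma as stated (harmless for its later use, where only the vanishing of coefficients matters), but you should flag the discrepancy rather than silently identify the two constants.
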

\begin{proof}
Let $\boldsymbol{x}=r\boldsymbol{u}$ where \emph{r} = $\Vert\boldsymbol{x}\Vert$
and \emph{$\boldsymbol{u}$} = $\boldsymbol{x}/\Vert\boldsymbol{x}\Vert$.
To avoid trivialities again, when \emph{$\boldsymbol{x}$} = 0, set
\emph{r} = 0, define \emph{$\boldsymbol{u}$} arbitrarily and agree
to take $r^{i}$ = 1. Consider the polynomial $p(\gamma(r\boldsymbol{u}))$
on $\mathbb{R}^{+}\times\Omega_{m}$. For fixed \emph{$r$}, this
is a polynomial of degree at most \emph{$k$} in the components of
$\boldsymbol{u}$. Now, because the spherical harmonics $\{\Psi_{i,\ell}(\cdot),\ell=1,\ldots,e_{m}(i),i\geq0\}$
form a CONB with respect to the scalar product $\left\langle \cdot,\cdot\right\rangle _{d\omega_{m}}$
for $L^{2}(\Omega_{m})$ , and thus for the restriction to $\Omega_{m}$
of the space of polynomials of degree \emph{$k$} on $\mathbb{R}^{m}$,
it follows that 
\[
p(\gamma(r\boldsymbol{u}))=\sum_{i=0}^{k}\sum_{\ell=1}^{e_{m}(i)}q_{i,\ell}(r;\gamma)\Psi_{i,\ell}(\boldsymbol{u}),
\]
where
\[
q_{i,\ell}(r;\gamma)=\frac{1}{\omega_{m}(\Omega_{m})}\intop_{\Omega_{m}}p(\gamma(r\boldsymbol{u}))\Psi_{i,\ell}(\boldsymbol{u})d\omega_{m}(\boldsymbol{u}).
\]
Setting $e=(\mathbf{I}_{m},\mathbf{0})$ as the identity element of
$AL(m)$, this shows that
\begin{flalign}
p(\gamma(\boldsymbol{x})) & =\sum_{i=0}^{k}\sum_{\ell=1}^{e_{m}(i)}w_{i,\ell}(\Vert\boldsymbol{x}\Vert;\gamma)\tilde{\Psi}_{i,\ell}(\boldsymbol{x};e)\nonumber \\
 & =\sum_{i=0}^{k}\sum_{\ell=1}^{e_{m}(i)}w_{i,\ell}(\Vert\boldsymbol{x}\Vert;\gamma)\tilde{\Psi}_{i,\ell}(\gamma(\boldsymbol{x});\gamma^{-1}),\label{eq:expression de p(gamma(x))}
\end{flalign}
where $w_{i,\ell}(\Vert\boldsymbol{x}\Vert;\gamma)$ = $\Vert\boldsymbol{x}\Vert^{-i}q_{i,\ell}(\Vert\boldsymbol{x}\Vert;\gamma)$.
Thus
\begin{equation}
p(\boldsymbol{x})=\sum_{i=0}^{k}\sum_{\ell=1}^{e_{m}(i)}w_{i,\ell}(\Vert\gamma^{-1}(\boldsymbol{x})\Vert;\gamma)\tilde{\Psi}_{i,\ell}(\boldsymbol{x};\gamma^{-1}).\label{eq:A3}
\end{equation}
Now, because $\tilde{\Psi}_{i,\ell}(\boldsymbol{x};\gamma^{-1})$
is a polynomial of degree \emph{$i$} in \emph{$\boldsymbol{x}$}
and $p(\gamma(\boldsymbol{x}))$ is a polynomial of degree at most
$k$, it follows that each $w_{i,\ell}(\Vert\boldsymbol{x}\Vert;\gamma)$
is a polynomial of degree at most $[(k-i)/2]$ in $\Vert\boldsymbol{x}\Vert^{2}$.
Hence in turn, it can be written as a linear combination of the polynomials
$s_{j,i}(\Vert\boldsymbol{x}\Vert)$, $i=0,\ldots,[(k-i)/2]$ of (\ref{eq:3.4}),
namely
\begin{equation}
w_{i,\ell}(r;\gamma)=\sum_{j=0}^{[(k-i)/2]}a_{j,i,\ell}(\gamma)s_{j,i}(r).\label{eq:A4}
\end{equation}
Combining (\ref{eq:A3}) and (\ref{eq:A4}) yields (\ref{eq:A.1}).
Now using (\ref{eq:3.2}) and (\ref{eq:3.4}) with (\ref{eq:A4}),
we have
\begin{flalign}
a_{j,i,\ell}(\gamma) & =\omega_{m}(\Omega_{m})c_{m}\intop_{0}^{\infty}w_{i,\ell}(r;\gamma)s_{j,i}(r)\phi_{m}(r^{2})r^{m-1+2j}dr,\\
 & =c_{m}\intop_{\mathbb{R}^{m}}w_{i,\ell}(\Vert\boldsymbol{x}\Vert;\gamma)s_{j,i}(\Vert\boldsymbol{x}\Vert)\phi_{m}(\Vert\boldsymbol{x}\Vert^{2})\Vert\boldsymbol{x}\Vert^{2j}d\boldsymbol{x},\\
 & ={\scriptsize c_{m}\det(\mathbf{A})\intop_{\mathbb{R}^{m}}w_{i,\ell}(\Vert\gamma^{-1}(\boldsymbol{x})\Vert;\gamma)s_{j,i}(\Vert\gamma^{-1}(\boldsymbol{x})\Vert)\phi_{m}(\Vert\gamma^{-1}(\boldsymbol{x})\Vert^{2})\Vert\gamma^{-1}(\boldsymbol{x})\Vert^{2i}d\boldsymbol{x}},\label{eq:A5}
\end{flalign}
upon transforming from polar to cartesian coordinates. Finally, substitute
(\ref{eq:A3}) into the right-hand side of (\ref{eq:A2}). Simple
algebra using the orthogonality of the $\Psi_{i,\ell}(\cdot)$ yields
the right-hand side of (\ref{eq:A5}). This concludes the proof of
the lemma.
\end{proof}
Proof of \emph{i}). A change of variable in \ref{eq:3.6} shows that,
without loss of generality, we can set $\eta=(\boldsymbol{0},\mathbf{I}_{m}).$
By (\ref{eq:3.6}), when $k\neq k^{\prime}$, the spaces $\Pi_{k}$
and $\Pi_{k^{\prime}}$ are orthogonal with respect to $\left\langle \cdot,\cdot\right\rangle {}_{(f,\eta)}$.
By the lemma, all polynomials $p(\cdot)$ in $\mathcal{P}_{k}$ have
an expansion of the form (\ref{eq:A.1}) where from (\ref{eq:A5})
we can take $\gamma$ = $e$ to simplify the derivations, while all
$p_{k^{\prime}}(\gamma(\cdot))\in\Pi_{k^{\prime}}$ have an expansion
of the form \ref{eq:expression de p(gamma(x))}. Substituting these
expansions into (\ref{eq:3.6}) shows, after careful identification
and using the orthonormality of the $s_{j,i}(\cdot),$ $\Psi_{i,\ell}(\cdot)$,
that $p(\cdot)$ $\in$ $\Pi_{k}$ if and only if its coefficient
in (\ref{eq:A.1}) satisfies $a_{j,i,\ell}(e)$ = 0 for every \emph{i},
\emph{j} such that \emph{i} $\neq k-2j$.\textcolor{red}{{} }Thus, after
rearranging the indices and extending to any $\gamma$ $\in$ $AL(m)$,
each $p(\cdot)$ $\in$ $\Pi_{k}$ has an expansion of the form
\begin{equation}
p(\boldsymbol{x})=\sum_{j=0}^{[k/2]}\sum_{\ell=1}^{e_{m}(j)}a_{j,k-2j,\ell}(\gamma)s_{j,k-2j}(\Vert\gamma^{-1}(\boldsymbol{x})\Vert)\tilde{\Psi}_{k-2j,\ell}(\boldsymbol{x};\gamma^{-1}).\label{eq:A6}
\end{equation}
Because the set of polynomials $s_{j,k-2j}(\Vert x\Vert)\tilde{\Psi}_{k-2j,\ell}(\boldsymbol{x})$
are orthonormal with respect to $\left\langle \cdot,\cdot\right\rangle {}_{(f,\eta)}$
and span $\Pi_{k}$, it follows from (\ref{eq:A.1}) and (\ref{eq:A6})
that $\mathcal{P}_{k}$ = $\Pi_{k}\oplus\mathcal{P}_{k-1}$. This
proves the first part of \emph{i}). The second part comes from the
fact that the polynomials are dense in $L_{AL(m)}^{2}(f,\eta)$.

Proof of \emph{ii}) By construction, $\Pi_{k}$ is an $AL(m)$-invariant
subspace. From (\ref{eq:A6}), for any \emph{$\gamma$} $\in$ $AL(m)$,
there is only one element of $\Pi_{k}$, up to a multiplicative constant,
that is invariant over the subgroup of rotations. It follows from
Schur\textquoteright s lemma that $\Pi_{k}$ is irreducible. Indeed,
Schur\textquoteright s lemma \cite[p.~390]{Helgason1984} asserts that
if two representations of a group are irreducible then every commuting
non-zero linear map between them is an isomorphism and that a representation
is irreducible if and only if every such linear map is a scalar multiple
of the identity element. In the present context, showing that the
space $\text{\ensuremath{\Pi_{k}}}$ is irreducible amounts to show
that the representation of $AL(m)$ induced by the coefficients $a_{k,j,\ell}(\gamma)$
of its basis in (\ref{eq:A6}) is irreducible. The identity element
is then the element that is invariant over rotations which are identified
with the commuting maps.

Setting $g^{-1}=(V^{-1/2},-V^{-1/2}\mu)$, the
$$\pi_{k,j,\ell}(r,u)= s_{j,k-2j}(\Vert g^{-1}(x)\Vert)\tilde{\Psi}_{k-2j,\ell}(x,g^{-1})$$
are orthonormal with respect to $\left\langle \cdot,\cdot\right\rangle {}_{(f,\eta)}$
and, from (\ref{eq:A6}), form a complete basis for $\Pi_{k}$. Let
$\Pi_{k,j}(g^{-1})$ be the space generated by the span of $\{s_{j,k-2j}(\Vert g^{-1}(x)\Vert\tilde{\Psi}_{k-2j,\ell}(x;g^{-1})$,
$\ell$= $1,\ldots,e_{m}(k-2j)\}$. Because, for fixed $(k,j)$, $\Pi_{k,j}(g^{-1})$
is closed under rotations, the functions that span $\Pi_{k,j}(g^{-1})$
are, up to multiplicative constants, the same as those that span $E_{m}(k-2j)$.
Thus $\Pi_{k,j}(g^{-1})$ is an irreducible rotation\textendash invariant
subspace of $\Pi_{k}$ of dimension $e_{m}(k-2j)$. Combining these
functions shows that for $g^{-1}$ = $(V^{-1/2},-\mu)$, $\Pi_{k}$
= $\Pi_{k,0}(g^{-1})\oplus\cdots\oplus\Pi_{k,[k/2]}(g^{-1})$, with
respect to $\left\langle \cdot,\cdot\right\rangle {}_{(f,\eta)}$.
The dimension of $\Pi_{k}$ is computed from this direct sum as a
combinatorial exercise.

\subsection{Derivation of the $\mathcal{Q}=\mathcal{UIR}$ decomposition}\label{subsec:8.2 Derivation-of-the decomposition}

We derive (\ref{eq:alternate expression for the test statistic})
and (\ref{eq: U I R decomp}).
First reparametrize $\boldsymbol{\eta}=(\boldsymbol{\mu},\mathbf{V}^{-1})$
into the $m+m(m+1)/2$ vector $\boldsymbol{\eta}=(\boldsymbol{\mu},\textrm{lvec}(\mathbf{V}^{-1}))^{T}$,
where $\textrm{lvec}(\cdot)$ is the ``lower vec'' operator. Introduce the $m^{2}\times m(m+1)/2$ matrix $\mathbf{B}$
with element :
\begin{alignat*}{1}
b_{ij,k\ell} & =\left\{ \begin{array}{cc}
1 & \textrm{if }(i=k\textrm{ and }j=\ell)\textrm{ or }(i=\ell\textrm{ and }j=k)\\
0 & \mathscr{\textrm{otherwise}}
\end{array},\right.
\end{alignat*}
where $i$ first goes from 1
to $m$ and then $j$ goes from 1 to $m$ while $k$ varies from $\ell$
to $m$ followed by $\ell$ varying from 1 to $m$. This matrix, which
has rank $m(m+1)/2$, is such that $\textrm{vec}(\mathbf{V}^{-1})=\mathbf{B}\cdot\textrm{lvec}(\mathbf{V}^{-1})$.
Now some algebra yields
\begin{align*}
\frac{\partial\log f(\boldsymbol{x};\boldsymbol{\eta})}{\partial\boldsymbol{\eta}}= & \mathbf{N}_{\eta}\boldsymbol{Z},
\end{align*}
where ($\otimes$ denotes Kronecker's product)
\begin{align*}
\mathbf{N}_{\eta} & =\left(\begin{array}{cc}
\mathbf{V}^{-1/2} & \mathbf{0}\\
\mathbf{0} & \frac{1}{2}\mathbf{B}^{T}(\mathbf{V}^{-1/2}\otimes\mathbf{V}^{-1/2})\mathbf{B}
\end{array}\right),
\end{align*}

\begin{align}
\boldsymbol{Z} & =\left(\begin{array}{c}
\boldsymbol{Z}_{1}\\
\boldsymbol{Z_{2}}
\end{array}\right)=\left(\begin{array}{c}
R\times g(R^{2})\times\boldsymbol{U}\\
\textrm{lvec}(\mathbf{I}_{m}-\zeta(R^{2})\boldsymbol{U}\boldsymbol{U}^{T})
\end{array}\right),\label{eq:expression de Z-1}
\end{align}
with $g(r^{2})=$$\left.-2\phi_{m}^{\prime}(s)/\phi_{m}(s)\right|_{s=r^{2}}$
and $\zeta(r^{2})=r^{2}\times g(r^{2})$. Note that $\mathbf{N}_{\eta}$
is invertible and the distribution of $\boldsymbol{Z}$ does not depend
on $\boldsymbol{\eta}$; it has expectation 0 and, in view of 
results about the $U(\Omega_{m})$ distribution (see \cite[p.~239, Ex.~4]{Bilodeau1999}
, its variance is block-diagonal with
the inverse of these blocks being :
\begin{alignat}{1}
\mathbb{V}^{-1}\left(\mathbf{Z}_{1}\right) & =m\mathbf{I}_{m}/\sigma_{1},\nonumber \\
 \mathbb{V}^{-1}\left(\mathbf{Z}_{2}\right) & ={\scriptsize\sigma_{2}[\mathbf{I}_{m(m+1)/2}-\frac{1}{2}\textrm{Diag(}\textrm{lvec}(\mathbf{I}_{m}))]-\frac{\sigma_{2}(1-\sigma_{2})}{4+2m(1-\sigma_{2})}\textrm{lvec}(\mathbf{I}_{m})\textrm{lvec}^{T}(\mathbf{I}_{m})},\label{eq: blocs de l'info de Fisher-1}
\end{alignat}
where $\sigma_{1}=\mathbb{E}_{0}(R^{2}\times g^{2}(R^{2}))$ and $\sigma_{2}=m(m+2)/\mathbb{E}_{0}(\zeta^{2}(R^{2}))$.
Then $\boldsymbol{\mathcal{J}}_{\eta}=\mathbf{N}_{\eta}\mathbb{V}(\boldsymbol{Z})\mathbf{N}_{\eta}^{T}$
is also block-diagonal. Moreover, the lines of $\mathbf{J}_{\eta}$
are 
{\scriptsize\begin{alignat*}{1}
\mathcal{\mathbb{C}}\mathrm{o}\mathrm{v}_{0}\left(\pi_{k,j,\ell}(R,\boldsymbol{U}),\partial\log f(\boldsymbol{X};\eta)/\partial\boldsymbol{\eta}^{T}\right) & =\mathbf{N}_{\eta}\,\mathbb{E}_{0}\left(R^{k-2j}s_{j,k-2j}(R^{2})g(R^{2})\Psi_{k-2j,\ell}(\boldsymbol{U})\boldsymbol{Z}^{T}\right).
\end{alignat*}}
Hence the elements of $\text{\ensuremath{\mathbf{J}_{\eta}\boldsymbol{\mathcal{J}}_{\eta}^{-1}\mathbf{J}_{\eta}^{T}}}$
do not involve $\boldsymbol{\eta}$ and $(\mathbf{I}_{v_{\mathcal{Q}}}-\mathbf{J}_{\eta}\boldsymbol{\mathcal{J}}_{\eta}^{-1}\mathbf{J}_{\eta}^{T})$
depends only on the distribution of $R$. Defining $\boldsymbol{\Psi}_{k}=(\Psi_{k,\ell}(\boldsymbol{U}),\ell=1,\ldots,e_{m}(k))^{T}$,
it follows that $\boldsymbol{U}=\boldsymbol{\Psi}_{1}/\sqrt{m}$ while
$\textrm{lvec}(\boldsymbol{U}\boldsymbol{U}^{T})=\mathbf{A}_{m}(\boldsymbol{\Psi}_{0}^{T},\boldsymbol{\Psi}_{2}^{T})^{T}$,
where $\mathbf{A}_{m}$ is a square matrix of order $m(m+1)/2$) partitioned
as $(\mathbf{A}_{m(1)}\,\brokenvert\,\mathbf{A}_{m(2)})$ with $\mathbf{A}_{m(1)}=\textrm{lvec}(\mathbf{I}_{p})/m$
being its first column. The expression of $\mathbf{A}_{m(2)}$
is not required beyond the relationships $\textrm{lvec}(\mathbf{I}_{p})^{T}\mathbf{A}_{m(2)}=\mathbf{0}$
and $\mathbf{A}_{m(2)}^{T}\mathbf{A}_{m(2)}=\frac{1}{m(m+2)}\mathbf{I}_{\frac{m(m+1)}{2}-1}$,
from the expression for $\mathbb{V}(\textrm{lvec}(\boldsymbol{U}\boldsymbol{U}^{T}))$
in \cite[p. 239, Ex. 4]{Bilodeau1999}
. It follows from the
orthogonality of the $\Psi_{k-2j,\ell}(\cdot)$ that
{\scriptsize\begin{align}
\mathbb{E}_{0}\left(R^{k-2j}s_{j,k-2j}(R^{2})\boldsymbol{\Psi}_{k-2j}\boldsymbol{Z}_{1}^{T}\right) & =\begin{cases}
\mathbb{E}_{0}\left(s_{j,1}(R^{2})\zeta(R^{2})\right)\mathbf{I}_{m}/\sqrt{m} & \textrm{ if }k-2j=1\\
\mathbf{0} & \textrm{otherwise}
\end{cases},\label{eq: ligne non nulle des 2 premi=0000E8res colonnes de J_eta-1}
\end{align}}
while adding the orthogonality of the $s_{j,i}(\cdot)$,
{\scriptsize\begin{align}
\mathbb{E}_{0}\left(R^{k-2j}s_{j,k-2j}(R^{2})\boldsymbol{\Psi}_{k-2j}\boldsymbol{Z}_{2}^{T}\right) & =\begin{cases}
-\mathbb{E}_{0}\left(s_{j,0}(R^{2})\zeta(R^{2})\right)\mathbf{A}_{m(1)}^{T} & \textrm{ if }k-2j=0\\
-\mathbb{E}_{0}\left(R^{2}s_{j,2}(R^{2})\zeta(R^{2})\right)\mathbf{A}_{m(2)}^{T} & \textrm{ if }k-2j=2\\
\mathbf{0} & \textrm{otherwise }
\end{cases}.\label{eq:ligne non nulle des dernires col. de J_eta cas R-1}
\end{align}}
Collect the constants in these expressions
into the following vectors : for $k-2j=0$, $\mathbf{c}_{0}^{T}=\left(-\mathbb{E}_{0}\left(s_{j,0}(R^{2})\zeta(R^{2})\right),j=1,...,\left[K/2\right]\right)$,
for $k-2j=1,$ $\mathbf{c}_{1}^{T}=\left(\mathbb{E}_{0}\left(s_{j,1}(R^{2})\zeta(R^{2})\right)/\sqrt{m},j=1,...,\left[(K+1)/2\right]\right)$
and for $k-2j=2$, $\mathbf{c}_{2}^{T}=\left(-\mathbb{E}_{0}\left(R^{2}s_{j,2}(R^{2})\zeta(R^{2})\right),j=1,...,\left[K/2\right]\right)$.
Permute the components of $\left(\pi_{k,j,\ell}(\cdot,\cdot),k=1,...,K,(j,\ell)\in B_{k}\right)^{T}$
into $(\boldsymbol{\pi}_{\mathcal{U}}^{T},\boldsymbol{\pi}_{\mathcal{I}}^{T},\boldsymbol{\pi}_{\mathcal{R}}^{T})^{T}$
so that all $\pi_{k,0,\ell}(\cdot,\cdot)$ with $k\geq3$ are in $\boldsymbol{\pi}_{\mathcal{U}}$
in increasing order of $\ell$ and then $k$, all $\pi_{k,[k/2],1}(\cdot,\cdot)$
with $k\geq2$ are in $\boldsymbol{\pi}_{\mathcal{R}}$ while all other terms are regrouped in $\boldsymbol{\pi}_{\mathcal{I}}$
in the following way : let $\mathfrak{I}_{1}$ be the matrix with
rows $(\pi_{k,j,\ell}(r,\boldsymbol{u})\,\left|\,k-2j=1,\right.\ell=1,...,e_{m}(k-2j))$
ordered from row to row according to $k$. Similarly define $\mathfrak{I}_{2}$
with $k-2j=2$ and regroup all other terms (those with $k-2j>2$)
into $\boldsymbol{\pi}_{\mathcal{I},3}$ ordered lexicographically.
Then $\boldsymbol{\pi}_{\mathcal{I}}^{T}=(\textrm{vec}(\mathfrak{I}_{1})^{T},\textrm{vec}(\mathfrak{I}_{2})^{T},\boldsymbol{\pi}_{\mathcal{I},3}^{T})^{T}$$=(\boldsymbol{\pi}_{\mathcal{I},1}^{T},\boldsymbol{\pi}_{\mathcal{I},2}^{T},\boldsymbol{\pi}_{\mathcal{I},3}^{T})^{T}$.
Form $(\boldsymbol{\bar{\pi}}_{\mathcal{U}}^{T},\boldsymbol{\bar{\pi}}_{\mathcal{I}}^{T},\boldsymbol{\bar{\pi}}_{\mathcal{R}}^{T})^{T}$,
the permuted version of (\ref{eq:2.6}) ordered in the same way with
$\boldsymbol{\bar{\pi}}_{\mathcal{I}}^{T}=(\boldsymbol{\bar{\pi}}_{\mathcal{I},1}^{T}=\textrm{vec}(\bar{\mathfrak{I}}_{1})^{T},\boldsymbol{\bar{\pi}}_{\mathcal{I},2}^{T}=\textrm{vec}(\bar{\mathfrak{I}}_{2})^{T},\boldsymbol{\bar{\pi}}_{\mathcal{I},3}^{T})^{T}$.
Easy calculations show that $\mathbf{J}_{\eta}$ with
rows permuted accordingly has the form
\begin{equation}
\mathbf{J}_{\eta}=\mathbf{\mathbf{N}_{\eta}}\left(\begin{array}{cc}
\begin{array}{c}
\mathbf{0}\\
\nu_{\mathcal{U}}\times m
\end{array} & \begin{array}{c}
\mathbf{0}\\
\nu_{\mathcal{U}}\times m(m+1)/2
\end{array}\\
\begin{array}{c}
\boldsymbol{\tau}_{11}\\
\nu_{\mathcal{I}}\times m
\end{array} & \begin{array}{c}
\boldsymbol{\tau}_{12}\\
\nu_{\mathcal{I}}\times m(m+1)/2
\end{array}\\
\begin{array}{c}
\mathbf{0}\\
\nu_{\mathcal{R}}\times m
\end{array} & \begin{array}{c}
\boldsymbol{\tau}_{22}\\
\nu_{\mathcal{R}}\times m(m+1)/2
\end{array}
\end{array}\right),\label{eq: Matrix J_eta-1}
\end{equation}
where $\nu_{\mathcal{R}}=[K/2]$ , $\nu_{\mathcal{U}}=\sum_{k=1}^{K}e_{m}(k)$
, $\nu_{\mathcal{I}}=\nu_{\mathcal{Q}}-\nu_{R}-\nu_{\mathcal{U}}$
where $v_{\mathcal{Q}}=\sum_{k=1}^{K}C_{k}^{m+k-1}$ . Recall that
the $\boldsymbol{\tau}_{ij}$ depend only on the distribution of $R$
under $H_{0}$. Thus
\begin{equation}
(\mathbf{I}_{v_{\mathcal{Q}}}-\mathbf{J}_{\eta}\boldsymbol{\mathcal{J}}_{\eta}^{-1}\mathbf{J}_{\eta}^{T})=\left(\begin{array}{ccc}
\mathbf{I}_{\nu_{\mathcal{U}}} & 0 & 0\\
0 & \boldsymbol{\Sigma}_{\mathcal{II}} & \boldsymbol{\Sigma}_{\mathcal{IR}}\\
0 & \boldsymbol{\Sigma}_{\mathcal{IR}}^{T} & \boldsymbol{\Sigma}_{\mathcal{RR}}
\end{array}\right).\label{eq:mat var-cov cas elliptic general-1}
\end{equation}
We focus on $\boldsymbol{\Sigma}_{\mathcal{IR}}=-\boldsymbol{\tau}_{12}\mathbb{V}^{-1}(\boldsymbol{Z}_{2})\boldsymbol{\tau}_{22}^{T}$.
The lines of matrix $\boldsymbol{\tau}_{22}$ have the form $-\mathbb{E}_{0}\left(s_{j,0}(R^{2})\zeta(R^{2})\right)(\textrm{lvec}(\mathbf{I}_{p}))^{T}$.
Combining this with the expression of $\mathbb{V}^{-1}\left(\mathbf{Z}_{2}\right)$
and the fact that the lines of $\boldsymbol{\tau}_{12}$ either vanish
or, from (\ref{eq:ligne non nulle des dernires col. de J_eta cas R-1})
have the form $-\mathbb{E}_{0}\left(R^{2}s_{j,2}(R^{2})\zeta(R^{2})\right)\mathbf{A}_{m(2)}^{T}$,
it follows that $\boldsymbol{\Sigma}_{\mathcal{IR}}=0$. Also $\boldsymbol{\Sigma}_{\mathcal{II}}$
= $\mathbf{I}_{\nu_{\mathcal{I}}}-\boldsymbol{\tau}_{11}\mathbb{V}^{-1}(\boldsymbol{Z}_{1})\boldsymbol{\tau}_{11}^{T}-\boldsymbol{\tau}_{12}\mathbb{V}^{-1}(\boldsymbol{Z}_{2})\boldsymbol{\tau}_{12}^{T}$
and $\boldsymbol{\Sigma}_{\mathcal{RR}}$ = $\mathbf{I}_{\nu_{\mathcal{R}}}-\boldsymbol{\tau}_{22}\mathbb{V}^{-1}(\boldsymbol{Z}_{2})\boldsymbol{\tau}_{22}^{T}$.
It is easy to see that $\boldsymbol{\Sigma}_{\mathcal{RR}}^{-1}=I_{[K/2]}+d_{0}\mathbf{c}_{0}\mathbf{c}_{0}^{T}$,
where $d_{0}=\frac{\sigma_{2}}{m(2+m(1-\sigma_{2})-\sigma_{2}\left\Vert c_{0}\right\Vert ^{2}}$.
It is again easy to see that $\mathbf{\boldsymbol{\Sigma}}_{\mathcal{II}}^{-1}$
is block diagonal, with block $\mathbf{\boldsymbol{\Sigma}}_{\mathcal{II},1}^{-1}$
= $\mathbf{I}_{e_{m}(1)\times[(K+1)/2]}+\mathbf{I}_{e_{m}(1)}\varotimes d_{1}\mathbf{c}_{1}\mathbf{c}_{1}^{T}$
, where $d_{1}=\frac{m}{\sigma_{1}-m\left\Vert c_{1}\right\Vert ^{2}}$
, block $\mathbf{\boldsymbol{\Sigma}}_{\mathcal{II},2}^{-1}=\mathbf{I}_{e_{m}(2)\times[K/2]}+\mathbf{I}_{e_{m}(2)}\varotimes d_{2}\mathbf{c}_{2}\mathbf{c}_{2}^{T}$
with $d_{2}=\frac{\sigma_{2}}{m(2+m)-\sigma_{2}\left\Vert c_{2}\right\Vert ^{2}}$
and block $\mathbf{\boldsymbol{\Sigma}}_{\mathcal{II},3}^{-1}$ $=\mathbf{I}$$_{\nu_{\mathcal{I}}-e_{m}(1)([(K+1)/2]+[K/2])}$.
Using standard properties of the $\textrm{vec}$ and $\varotimes$
operators, we get $\boldsymbol{\pi}_{\mathcal{I},1}^{T}\mathbf{\boldsymbol{\Sigma}}_{\mathcal{II},1}^{-1}\boldsymbol{\pi}_{\mathcal{I},1}$$=d_{1}tr(\mathbf{c}_{1}\mathbf{c}_{1}^{T}\bar{\mathfrak{I}}_{1}\bar{\mathfrak{I}}_{1}^{T})$
and similarly for the other term. Collecting these, we finally get
:
\begin{align}
  \mathcal{Q}_{K} & = & n\left[\left\Vert \boldsymbol{\bar{\pi}}_{\mathcal{U}}\right\Vert ^{2}+\left\Vert \boldsymbol{\bar{\pi}}_{\mathcal{I},1}\right\Vert ^{2}+d_{1}\textrm{tr}(\mathbf{c}_{1}\mathbf{c}_{1}^{T}\bar{\mathfrak{I}}_{1}\bar{\mathfrak{I}}_{1}^{T})+\left\Vert \boldsymbol{\bar{\pi}}_{\mathcal{I},2}\right\Vert ^{2}\right.\\
& +   & \left.d_{2}\textrm{tr}(\mathbf{c}_{2}\mathbf{c}_{2}^{T}\bar{\mathfrak{I}}_{2}\bar{\mathfrak{I}}_{2}^{T})+\left\Vert \boldsymbol{\bar{\pi}}_{\mathcal{I},3}\right\Vert ^{2}+\left\Vert \boldsymbol{\bar{\pi}}_{\mathcal{R}}\right\Vert ^{2}+d_{0}\textrm{tr}(\mathbf{c}_{0}\mathbf{c}_{0}^{T}\boldsymbol{\bar{\pi}}_{\mathcal{R}}\boldsymbol{\bar{\pi}}_{\mathcal{R}}^{T})\right].\label{eq: final value of the test statistic}
\end{align}

\section{MATHEMATICA commands to generate the $\pi_{k,j,\ell}(r,u)$ of Section~3.1
}\label{sec:Appeb-B} 

We give the MATHEMATICA commands to generate the $\{\Psi_{k,j}(u)\,|\,j=1,\ldots,e_{m}(k)\}$
of Section \ref{subsec:Sec3.1}. The package \texttt{\textcolor{black}{HFT10.m}}
must first be downloaded from the site given in Appendix~B of \cite{Axler2001} 
and loaded into MATHEMATICA via the command
\texttt{\textcolor{black}{<\textcompwordmark{}<''.../.../.../HFT10.m''}},
where \texttt{\textcolor{black}{.../.../.../}} is the path leading
to where the package \texttt{\textcolor{black}{HFT10.m}} is stored
on the computer; after typing \texttt{\textcolor{black}{<\textcompwordmark{}<}},
one can use the ``\texttt{\textcolor{black}{File Pat}}h...'' command
in the ``\texttt{\textcolor{black}{Insert}}'' menu to automatically
generate this path.

Once the package is loaded (the text ``{*}\texttt{\textcolor{black}{{}
You can now use the functions in this package.}}'' will appear on
the MATHEMATICA notebook), the user needs only to set the dimension
$m$ via the command \texttt{\textcolor{black}{setDimension{[}u, m{]}}}.
Here $u=(u_{1},...,u_{m})$ is the vector of variables in which the
spherical harmonics will be expressed. Then the command \medskip{}

$\mathtt{\Psi[k\_,u\_]:=\textrm{basisH[}k,u,\textrm{ Sphere}]/.\Vert u\Vert\rightarrow1;}$ 

\medskip{}
generates the basis of dimension $e_{m}(k)$ for $E_{m}(K)$. For
example, typing

\medskip{}

\texttt{\textcolor{black}{setDimension{[}u,5{]};}}

\texttt{\textcolor{black}{Do{[}Print{[}TableForm{[}}}$\mathtt{\Psi(k,u)}$\texttt{\textcolor{black}{{]}{]}},\texttt{\textcolor{black}{{}
\{k,1,4\}{]};}}}

\medskip{}
prints the $\{\Psi_{k,j}(u)\,|\,j=1,\ldots,e_{5}(k)\}$, for $k=1,...,$4.\medskip{}
To generate the $s_{j,i}(r)$ for the case of the null MVN distribution,
the command is

\medskip{}

\texttt{\textcolor{black}{s{[}j\_, i\_, r\_{]} := (-1)\textasciicircum{}j{*}Sqrt{[}j!{*}Gamma{[}m/2{]}/(2\textasciicircum{}i{*}Gamma{[}m/2
+ j + i{]}){]}{*} LaguerreL{[}j, m/2 + i - 1, r\textasciicircum{}2/2{]};}}

\medskip{}
Finally, to generate the $\pi_{k,j,\ell}(r,u)$, the commands are\medskip{}

\texttt{\textcolor{black}{e{[}m\_, k\_{]} := Which{[}k == 0, 1, k
== 1, m, k >= 2, Binomial{[}m + k - 1, m - 1{]} - Binomial{[}m + k
- 3, m - 1{]}{]};}}

$\mathtt{\pi funct}${[}\texttt{\textcolor{black}{k\_, j\_, $\ell$\_,
r\_{]} := r\textasciicircum{}(k - 2{*}j){*}s{[}j, k - 2{*}j, r{]}{*}}}$\mathtt{\Psi[k-2j,u][[\ell]}${]};

CONB$\Pi${[}\texttt{\textcolor{black}{k\_{]} := Flatten{[}Table{[}}}$\mathtt{\pi funct}${[}\texttt{\textcolor{black}{k,
j, $\ell$, r{]}, \{j, 0, Floor{[}k/2{]}\}, \{l, 1, e{[}m, k - 2{*}j{]}\}{]}{]};}}

\medskip{}
 Table \ref{tab:Table des polynomes pour m =00003D 2} lists the polynomials
$\pi_{k,j,\ell}(r,u)$ for the case $m=2,k=3,...,5$ that are used
in the experiment of Section \ref{sec:sec 6 Usefulness of Dx}.

\begin{table}
\centering{}%
\begin{tabular}{|c|c|c|}
\hline 
\textcolor{black}{$\pi_{3,0,1}$ = $\frac{r^{3}u_{1}(3-4u_{1}^{2})}{2\sqrt{6}}$} & \textcolor{black}{$\pi_{3,0,2}$ = $\frac{r^{3}u_{2}(1-4u_{1}^{2})}{2\sqrt{6}}$} & \textcolor{black}{$\pi_{3,1,1}$ = $\frac{r(r^{2}-4)u_{1}}{2\sqrt{2}}$}\tabularnewline
\hline 
\textcolor{black}{$\pi_{3,1,2}$}\textcolor{blue}{{} }\textcolor{black}{=}\textcolor{blue}{{}
}\textcolor{black}{$\frac{r(r^{2}-4)u_{2}}{2\sqrt{2}}$} & \textcolor{black}{$\pi_{4,0,1}$ = $\frac{r^{4}(1-8u_{1}^{2}+8u_{1}^{4})}{8\sqrt{3}}$} & \textcolor{black}{$\pi_{4,0,2}$ = $\frac{r^{4}u_{2}(u_{1}-2u_{1}^{3})}{2\sqrt{3}}$}\tabularnewline
\hline 
\textcolor{black}{$\pi_{4,1,1}$ = $\frac{r^{2}(r^{2}-6)(1-2u_{1}^{2})}{4\sqrt{3}}$} & \textcolor{black}{$\pi_{4,1,2}$ = $\frac{r^{2}(r^{2}-6)u_{1}u_{2})}{2\sqrt{3}}$} & \textcolor{black}{$\pi_{4,2,1}$ = $\frac{r^{4}-8r^{2}+8}{8}$}\tabularnewline
\hline 
\textcolor{black}{$\pi_{5,0,1}$ = $\frac{r^{5}u_{1}(5-20u_{1}^{2}+16u_{1}^{4})}{8\sqrt{30}}$} & \textcolor{black}{$\pi_{5,0,2}$ = $\frac{r^{5}u_{2}(1-12u_{1}^{2}+16u_{1}^{4})}{8\sqrt{30}}$} & \textcolor{black}{$\pi_{5,1,1}$ = $\frac{r^{3}(r^{2}-8)u_{1}(3-4u_{1}^{2})}{8\sqrt{6}}$}\tabularnewline
\hline 
\textcolor{black}{$\pi_{5,1,2}$ = $\frac{r^{3}(r^{2}-8)u_{2}(1-4u_{1}^{2})}{8\sqrt{6}}$} & \textcolor{black}{$\pi_{5,2,1}$ = $\frac{r(r^{4}-12r^{2}+24)u_{1}}{8\sqrt{3}}$} & \textcolor{black}{$\pi_{5,2,2}$ = $\frac{r(r^{4}-12r^{2}+24)u_{2}}{8\sqrt{3}}$}\tabularnewline
\hline 
\end{tabular}\caption{\label{tab:Table des polynomes pour m =00003D 2}Table of the polynomials
$\pi_{k,j,\ell}(r,u)$ of Theorem \ref{thm:3.1} in the case of a
MVN null hypothesis, for $m=2$ and $k=3,$ 4 and 5.}
 
\end{table}

Table \ref{tab:Table des polynomes pour m =00003D 3} lists the $\pi_{k,j,\ell}(r,u)$
required for the Open/Closed book example of Section \ref{sec:sec 6 Usefulness of Dx}
($m=3$ and $k=3,4$ and 5 )

\begin{table}
\centering{}%
\begin{tabular}{|c|c|c|}
\hline 
\textcolor{black}{$\pi_{3,0,1}$ = $\frac{r^{3}u_{1}(1-5u_{2}^{2})}{2\sqrt{10}}$} & \textcolor{black}{$\pi_{3,0,2}$ = $\frac{r^{3}u_{2}(3-5u_{2}^{2})}{2\sqrt{15}}$} & \textcolor{black}{$\pi_{3,0,3}$ = $\frac{r^{3}u_{3}(1-5u_{2}^{2})}{2\sqrt{10}}$}\tabularnewline
\hline 
\textcolor{black}{$\pi_{3,0,4}$}\textcolor{blue}{{} }\textcolor{black}{=}\textcolor{blue}{{}
${\color{black}\frac{r^{3}u_{1}(u_{1}^{2}-3u_{3}^{2})}{2\sqrt{6}}}$} & \textcolor{black}{$\pi_{3,0,5}$ = $\frac{r^{3}u_{2}(u_{1}^{2}-u_{3}^{2})}{2}$} & \textcolor{black}{$\pi_{3,0,6}$ = $\frac{r^{3}u_{3}(3u_{1}^{2}-u_{3}^{2})}{2\sqrt{6}}$}\tabularnewline
\hline 
\textcolor{black}{$\pi_{3,0,7}$ = $r^{3}u_{1}u_{2}u_{3}$} & \textcolor{black}{$\pi_{3,1,1}$ = $\frac{r(r^{2}-5)u_{1}}{\sqrt{10}}$} & \textcolor{black}{$\pi_{3,1,2}$ = $\frac{r(r^{2}-5)u_{2}}{\sqrt{10}}$}\tabularnewline
\hline 
\textcolor{black}{$\pi_{3,1,3}$ = $\frac{r(r^{2}-5)u_{3}}{\sqrt{10}}$} & \textcolor{black}{$\pi_{4,0,1}$ = ${\color{black}\frac{r^{4}(3-30u_{2}^{2}+35u_{2}^{4})}{8\sqrt{105}}}$} & \textcolor{black}{$\pi_{4,0,2}$ = $\frac{r^{4}u_{2}u_{3}(3-7u_{2}^{2})}{2\sqrt{42}}$}\tabularnewline
\hline 
\textcolor{black}{$\pi_{4,0,3}$ = $\frac{r^{4}(7u_{2}^{2}-1)(u_{3}^{2}-u_{1}^{2})}{4\sqrt{21}}$} & \textcolor{black}{$\pi_{4,0,4}$ = $\frac{r^{4}u_{2}u_{3}(3u_{1}^{2}-u_{3}^{2})}{2\sqrt{6}}$} & \textcolor{black}{$\pi_{4,0,5}$ = $\frac{r^{4}(u_{1}^{4}-6u_{1}^{2}u_{3}^{2}+u_{3}^{4})}{8\sqrt{3}}$}\tabularnewline
\hline 
\textcolor{black}{$\pi_{4,0,6}$ = $\frac{r^{4}u_{1}u_{2}(3-7u_{2}^{2})}{2\sqrt{42}}$} & \textcolor{black}{$\pi_{4,0,7}$ = $\frac{r^{4}u_{1}u_{3}(1-7u_{2}^{2})}{4\sqrt{21}}$} & \textcolor{black}{$\pi_{4,0,8}$ = $\frac{r^{4}u_{1}u_{2}(u_{1}^{2}-3u_{3}^{2})}{2\sqrt{6}}$}\tabularnewline
\hline 
\textcolor{black}{$\pi_{4,0,9}$}\textcolor{red}{{} }\textcolor{black}{=
$\frac{r^{4}u_{1}u_{3}(u_{1}^{2}-u_{3}^{2})}{2\sqrt{3}}$} & \textcolor{black}{$\pi_{4,1,1}$}\textcolor{red}{{} }\textcolor{black}{=
$\frac{r^{2}(r^{2}-7)(1-3u_{2}^{2})}{2\sqrt{42}}$} & \textcolor{red}{${\color{black}\pi_{4,1,2}}$ }\textcolor{black}{=
$\frac{r^{2}(r^{2}-7)u_{2}u_{3}}{\sqrt{14}}$}\tabularnewline
\hline 
\textcolor{black}{$\pi_{4,1,3}$ = $\frac{r^{2}(r^{2}-7)(u_{1}^{2}-u_{3}^{2})}{2\sqrt{14}}$} & \textcolor{black}{$\pi_{4,1,4}$ = $\frac{r^{2}(r^{2}-7)u_{1}u_{2}}{\sqrt{14}}$} & \textcolor{black}{$\pi_{4,1,5}$ = $\frac{r^{2}(r^{2}-7)u_{1}u_{3}}{\sqrt{14}}$}\tabularnewline
\hline 
$\pi_{4,2,1}$ = $\frac{(r^{4}-10r^{2}+15)}{2\sqrt{30}}$ & $\pi_{5,0,1}$ =$\frac{r^{5}u_{2}(15-70u_{2}^{2}+63u_{2}^{4})}{24\sqrt{105}}$ & $\pi_{5,0,2}$ =$\frac{r^{5}u_{3}(1-14u_{2}^{2}+21u_{2}^{4})}{24\sqrt{7}}$\tabularnewline
\hline 
$\pi_{5,0,3}$ =$\frac{r^{5}u_{2}(1-3u_{2}^{2})(u_{1}^{2}-u_{3}^{2})}{12}$ & $\pi_{5,0,4}$ =$\frac{r^{5}u_{3}(1-9u_{2}^{2})(3u_{1}^{2}-u_{3}^{2})}{24\sqrt{6}}$ & $\pi_{5,0,5}$ =$\frac{r^{5}u_{2}(9u_{1}^{4}-6u_{1}^{2}u_{3}^{2}-7u_{3}^{4})}{8\sqrt{3}}$\tabularnewline
\hline 
$\pi_{5,0,6}$ =$\frac{r^{5}u_{3}(5u_{1}^{4}-10u_{1}^{2}u_{3}^{2}+u_{3}^{4})}{8\sqrt{30}}$ & $\pi_{5,0,7}$ =$\frac{r^{5}u_{1}(1-14u_{2}^{2}+21u_{2}^{4})}{24\sqrt{7}}$ & $\pi_{5,0,8}$ =$\frac{r^{5}u_{1}u_{2}u_{3}(1-3u_{2}^{2})}{6}$\tabularnewline
\hline 
$\pi_{5,0,9}$ =$\frac{r^{5}u_{1}(1-9u_{2}^{2})(u_{1}^{2}-3u_{3}^{2})}{24\sqrt{6}}$ & $\pi_{5,0,10}$ =$\frac{r^{5}u_{1}u_{2}u_{3}(u_{1}^{2}-u_{3}^{2})}{2\sqrt{3}}$ & $\pi_{5,0,11}$ =$\frac{r^{5}u_{1}(u_{1}^{4}-10u_{1}^{2}u_{3}^{2}+5u_{3}^{4})}{8\sqrt{30}}$\tabularnewline
\hline 
$\pi_{5,1,1}$ =$\frac{r^{3}(r^{2}-9)u_{2}(3-5u_{2}^{2})}{6\sqrt{30}}$ & $\pi_{5,1,2}$ =$\frac{r^{3}(r^{2}-9)u_{3}(1-5u_{2}^{2})}{12\sqrt{5}}$ & $\pi_{5,1,3}$ =$\frac{r^{3}(r^{2}-9)u_{2}(u_{1}^{2}-u_{3}^{2})}{6\sqrt{2}}$\tabularnewline
\hline 
$\pi_{5,1,4}$ =$\frac{r^{3}(r^{2}-9)u_{3}(3u_{1}^{2}-u_{3}^{2})}{12\sqrt{3}}$ & $\pi_{5,1,5}$ =$\frac{r^{3}(r^{2}-9)u_{1}(1-5u_{2}^{2})}{12\sqrt{5}}$ & $\pi_{5,1,6}$ =$\frac{r^{3}(r^{2}-9)u_{1}u_{2}u_{3}}{3\sqrt{2}}$\tabularnewline
\hline 
$\pi_{5,1,7}$ =$\frac{r^{3}(r^{2}-9)u_{1}(u_{1}^{2}-3u_{3}^{2})}{12\sqrt{3}}$ & $\pi_{5,2,1}$ =$\frac{r(r^{4}-14r^{2}+35)u_{1}}{2\sqrt{70}}$ & $\pi_{5,2,2}$ =$\frac{r(r^{4}-14r^{2}+35)u_{2}}{2\sqrt{70}}$\tabularnewline
\hline 
$\pi_{5,2,3}$ =$\frac{r(r^{4}-14r^{2}+35)u_{3}}{2\sqrt{70}}$ & \multicolumn{1}{c}{%
} & \multicolumn{1}{c}{%
}\tabularnewline
\cline{1-1} 
\end{tabular}\caption{\label{tab:Table des polynomes pour m =00003D 3}Table of the polynomials
$\pi_{k,j,\ell}(r,u)$ of Theorem \ref{thm:3.1} in the case of a
MVN null hypothesis, for $m=3$ and $k=3,$ 4 and 5.}
\end{table}

\section{Smooth test for the bivariate logistic and Pearson type II distributions} 
\label{sec:Appeb-C} 

\subsection{The bivariate logistic distribution}\label{subsec:Section bivariate Logistic}

We consider the bivariate logistic distribution in \cite{Lemonte2011} with density generator $\phi_{m}(y)=e^{-y}/(1+e^{-y})^{2}$.
This is again a competitor to the MVN but with shorter tails. The
moments of $R^{2}$ have explicit but complicated expressions and
it is shorter to use numerical approximations. Consequently, the $s_{j,k-2j}(\cdot)$
required for $\mathcal{Q}_{5}$ are : $s_{0,1}(r)=0.849322$; $s_{0,2}(r)=0.551329$;
$s_{0,3}(r)=0.30403$; $s_{0,4}(r)=0.148319$; $s_{0,5}(r)=0.0654688$;
$s_{1,0}(r)=-1.18523+0.854964\,r^{2}$; $s_{1,1}(r)=-1.36758+0.576276\,r^{2}$;
$s_{1,2}(r)=-1.0461+0.318116\,r^{2}$; $s_{1,3}(r)=-0.646026+0.153749\,r^{2}$;
$s_{2,0}(r)=1.24468-1.86588\,r^{2}+0.407913\,r^{4}$ and $s_{2,1}(r)=1.79127-1.51119\,r^{2}+0.230011\,r^{4}$.
Next, $\zeta(r^{2})=2r^{2}\tanh(r^{2}/2)$, $\sigma_{1}=3.18173$,
$\sigma_{2}=0.82306$ and
\begin{quote}
\begin{alignat*}{1}
\sqrt{d_{0}}\mathbf{c}_{0} & =\{-9.45511,-0.77618\},\\
\sqrt{d_{1}}\mathbf{c}_{1} & =\{12.55,3.40145,-1.89893\},\\
\sqrt{d_{2}}\mathbf{c}_{2} & =\{-8.04922,-1.2599\}.
\end{alignat*}
\end{quote}
Continue as in the bivariate Laplace case.

\subsection{The bivariate Pearson type II distribution}\label{subsec:Section bivariate P=0000EBarson Type 2}

We consider the bivariate Pearson type II distribution described in
\cite[Sec~6.2]{Johnson1987} with density generator $\phi_{m}(y)=(1-y)^{\alpha}$
with $y\in[0,1]$. For $\alpha>0,$ this is another EC distribution
that somewhat resembles the MVN. The $j-$th moment of $R^{2}$ is
$(2j+\alpha)B(2+\alpha,1+j)$ and from (\ref{eq:3.5}), the $s_{j,k-2j}(\cdot)$
required for $\mathcal{Q}_{5}$ are : $s_{0,j}(r)=(\sqrt{(j+1)B(2+\alpha,j+1})^{-1}$
for $j=1,...,5$; $s_{1,j}(r)=\frac{(2+j+\alpha)r^{2}-(1+j)}{\sqrt{B(2+\alpha,j+2)(1+\alpha)(2+j+\alpha)}}$
for $j=0,...,3$; $s_{2,0}(r)=$ $\frac{(2+r^{2}(3+\alpha)(r^{2}(4+\alpha)-4)\sqrt{5+\alpha}}{2\sqrt{1+\alpha}}$
and $s_{2,1}(r)=\frac{(6+r^{2}(4+\alpha)(r^{2}(5+\alpha)-6)\sqrt{(3+\alpha)(6+\alpha)}}{2\sqrt{3}\sqrt{1+\alpha}}$.
Next, because $\zeta(r^{2})=\frac{2\alpha r^{2}}{1-r^{2}}$, one finds
$\sigma_{1}=\frac{4\alpha(\alpha+1)}{\alpha-1}$ , $\sigma_{2}=1-\alpha^{-1}$
(hence $\alpha>1$ in the sequel) and 
\begin{quote}
\begin{alignat*}{1}
\sqrt{d_{0}}\mathbf{c}_{0} & =-\frac{\sqrt{\alpha-1}}{3}\{\frac{(\alpha+2)}{2}\sqrt{(\alpha+3)},\sqrt{\alpha+5}\},\\
\sqrt{d_{1}}\mathbf{c}_{1} & =\frac{\sqrt{\alpha-1}}{2}\{\frac{\sqrt[3]{\alpha+2}\sqrt{(\alpha+1)(\alpha+3)}}{6},\frac{\sqrt{(\alpha+2)(\alpha+3)(\alpha+4)}}{3\sqrt{2}},\frac{\sqrt{\alpha+6}}{\sqrt{3}}\},\\
\sqrt{d_{2}}\mathbf{c}_{2} & =-\frac{\sqrt{\alpha-1}}{2\sqrt{2}}\{\frac{(3+\alpha)\sqrt{\alpha+1}}{\sqrt{3}},\sqrt{5+\alpha}\}.
\end{alignat*}
\end{quote}
Continue as in the previous cases. Note that here $\alpha$ is supposed
known, but the above can serve to prefigure the difficulties to be
encountered with an unknown shape parameter.

\section{An application} 
\label{sec:Appeb-D} 

As an application of the methods of the paper, consider the \textquotedblleft Open-book
closed-book examination\textquotedblright{} data set \cite[p.~3-4]{Mardia1979} corresponding to examination marks in Mechanics,
Vectors, Algebra, Analysis and Statistic for a sample of 88 students.
Here we consider only the marks in Vectors, Algebra and Statistics,
so that \emph{m} = 3 and we wish to test a trivariate MVN. We apply
our test strategy with\emph{ }$K=5$. All polynomials required for
these computations appear in Table \ref{tab:Table des polynomes pour m =00003D 3}.
The results are shown in Table \ref{tab:Table 1} which lists test
statistic $\mathcal{Q}_{5}$ and its scaled $\mathcal{UIR}$ components,
as well as the \emph{p}\textendash values obtained from the reference
asymptotic $\chi^{2}$ distribution and from Monte Carlo (20 000 replications)
approximations. The null hypothesis of trivariate MVN is rejected
at the 5\% level. Inspection of the scaled components shows that both
the distributions of $R$ and $\boldsymbol{U}$ appear consonant with
the MVN and that rejection comes from correlations between \emph{R}
and $\boldsymbol{U}$. If the confidence build in this section for
$m=2$ can be transferred to the case $m=3$, a refined model should
try to take into account the dependencies between these random quantities. 

\begin{table}
\begin{centering}
\begin{tabular}{|c|c|c|c|c|}
\hline 
Test Statistics
 & 
Value
 & \emph{d.f}. & \emph{p}-value $\chi^{2}$ & \emph{p}-value MC\tabularnewline
\hline 
\hline 
$\mathcal{Q}_{5}$ & 98.62 & 46 & 0.004 & %
0.027%
\tabularnewline
\hline 
$\mathcal{U}_{5}^{(s)}$ & %
37.38%
 & 27 & %
0.088%
 & %
0.163%
\tabularnewline
\hline 
$\mathcal{I}_{5}^{(s)}$ & %
33.14%
 & 18 & %
0.016%
 & %
0.028%
\tabularnewline
\hline 
$\mathcal{R}_{5}^{(s)}$ & %
0.77%
 & 1 & %
0.381%
 & %
0.466%
\tabularnewline
\hline 
\end{tabular}
\par\end{centering}
\caption{\label{tab:Table 1}Examination marks ($n=88$) in Vectors, Algebra
and Statistics from the ``Open book-Closed book examination'' data
set \cite[p.~3-4]{Mardia1979}. The null hypothesis is
a trivariate normal distribution; \emph{d.f. }refers to degrees of
freedom of the asymptotic $\chi^{2}$ approximation, \emph{p-value
}$\chi^{2}$ are computed from these reference distributions, while
\emph{p}-value MC refers to the p-value computed from a Monte Carlo
approximation (20 000 replications)}
\end{table}

\end{document}